\newtheorem{defi}{Definition}
\newtheorem{theo}{Theorem}
\newtheorem{coro}{Corollary}
\newtheorem{la}{Lemma}
\title[Bounding the $k$-Steiner Wiener and Wiener-type indices of trees \ldots]{Bounding the $k$-Steiner Wiener and Wiener-type indices of trees in terms of eccentric sequence}
\author{Peter Dankelmann}
\author{Audace A. V. Dossou-Olory}
\thanks{This work is supported by the National Research Foundation of South Africa, grant 118521}
\address{Peter Dankelmann \and Audace A. V. Dossou-Olory \\ Department of Mathematics and Applied Mathematics \\ University of Johannesburg \\ P.O. Box 524, Auckland Park, Johannesburg 2006, South Africa}
\email{audace@aims.ac.za}
\email{pdankelmann@uj.ac.za}
\subjclass[2010]{Primary 05C05; secondary 05C12, 05C35}
\keywords{$k$-Steiner distance, Wiener-type indices, eccentric sequence, caterpillar, extremal tree structures, Hyper-Wiener index, generalised Wiener index, diameter, Harary index, complementary Wiener index.}
\begin{document}
	
\maketitle \vspace*{-1cm}

\begin{abstract}
The eccentric sequence of a connected graph $G$ is the nondecreasing sequence of 
the eccentricities of its vertices. The Wiener index of $G$ is the sum of the distances
between all unordered pairs of vertices of $G$. The unique trees that minimise the
Wiener index among all trees with a given eccentric sequence were recently determined 
by the present authors. In this paper we show that these results hold not only for
the Wiener index, but for a large class of distance-based topological indices which
we term Wiener-type indices. Particular cases of this class 
include the hyper-Wiener index, the Harary index, the generalised Wiener index 
$W^{\lambda}$ for $\lambda >0$ and $\lambda <0$, and the reciprocal 
complementary Wiener index. Our results imply and unify known bounds on these  
Wiener-type indices for trees of given order and diameter. 

We also present similar results for the $k$-Steiner Wiener index of trees with a given eccentric sequence. The Steiner distance of a set $A\subseteq V(G)$ is the minimum number of edges in a 
subtree of $G$ whose vertex set contains $A$, and the $k$-Steiner Wiener index is the sum of distances of all $k$-element subsets of $V(G)$. As a corollary, we obtain
a sharp lower bound on the $k$-Steiner Wiener index of trees with given order and diameter, and determine in which cases the extremal tree is unique, thereby correcting an error in the literature.  
\end{abstract}

\section{Introduction and notation}
Graphs in this paper are simple and with at least two vertices. If $G$ is a graph, then 
$V(G)$ and $E(G)$ denote its vertex set and edge set, respectively. The 
\emph{Wiener index} $W(G)$ of a connected graph $G$ is defined as
\begin{align*}
W(G)=\sum_{\{u,v\}\in V(G)} d(u,v)\,,
\end{align*}
where $d(u,v)$ is the usual distance between two vertices $u$ and $v$ of $G$. The Wiener 
index was introduced by Wiener~\cite{Wiener1947} in 1947 as a structural descriptor for the molecular graphs of alkanes. 

The Wiener index has been studied extensively in the mathematical literature; see, for example,~\cite{Dobrynin2011,XuSurvery}. Of particular interest are relations between the Wiener index and other distance-based graph invariants. The problem to determine the maximum
Wiener index of a graph with given order and diameter (defined as the largest of the 
distances between vertices), posed by Plesn\'\i k \cite{Ple1984} in 1984, has 
attracted much attention (see \cite{MukVet2014, SunIkiSkrWuk2019, Wag2006}) and has
only recently been solved asymptotically by Cambie \cite{Cam2018}. 
The minimum Wiener index of graphs with given order and diameter was determined 
in \cite{Ple1984}. The corresponding problem for trees was solved in \cite{LiuPan2008}. 
The relationship between Wiener index and radius (defined as the smallest eccentricity
of the vertices of $G$, where the eccentricity $ec(v)$ of a vertex $v$ in $G$ is
defined as $\max_{u \in V(G)}(d(v,u)$), was considered by Cambie \cite{Cam2019},
who asymptotically determined the minimum Wiener index of a graph with given order 
and radius, thus asymptotically confirming a conjecture by Chen, Wu and An~\cite{CheWuAn2013}. An upper bound on the Wiener index of graphs in terms of
order and radius was given in~\cite{DasNad2017}. 
The relationship between Wiener index and eccentricities was explored
by Darabi, Alizadeh, Klav\v{z}ar and Das~\cite{DarAliKlaDas2018}, who, among 
other results, gave an upper bound on the Wiener index with given total eccentricity
(i.e. the sum of the eccentricities of all vertices) and a lower bound on the 
Wiener index in terms of total eccentricity, order and size. 

\medskip
In~\cite{AudacePeter} the present authors explored the relationship 
between eccentricities by proving a sharp lower bound on the Wiener index
of a tree with a given \emph{eccentric sequence}. The eccentric
sequence of a graph is the nondecreasing sequence of the eccentricities of 
the vertices of $G$. Moreover, the unique extremal tree was determined. 
These results parallel similar theorems on the Wiener-type index as well as the $k$-Steiner Wiener index of trees with a given degree sequence~\cite{SmuckType,JiZhang2019}. 

The aim of this paper is to show that the bound from~\cite{AudacePeter} on the 
Wiener index of 
trees with a given eccentric sequence extends to a much larger class of distance-based topological indices. A large number of topological indices were 
conceived for the purpose of describing relationships between structural formulas 
and molecular graphs, and many of the distance-based topological indices are 
variants or generalisations 
of the Wiener index; see~\cite{diudea1998wiener} and the recent survey~\cite{XuSurvery} for 
more information on these indices and chemical applications. 
In this paper we 
are concerned with variants of the Wiener index that can be expressed
in the form 
\[ W(G;g):=\sum_{\{u,v\}\in V(G)} g(d(u,v)), \]
where $g(x)$ is a nonnegative real-valued function on $\mathbb{N}$ (the set of all positive integers) that is either
nondecreasing or nonincreasing in $x$. We say that $W(G;g)$ is the \emph{Wiener-type} index of $G$ with respect to $g$. 
In addition to the ordinary Wiener index $W(G)$, this definition encompasses some 
well-known distance-based topological indices, such as 
the Harary index $H(G):=\sum_{\{u,v\}\subset V(G)} \frac{1}{d(u,v)}$, 
the hyper-Wiener index $WW(G):=\sum_{\{u,v\}\subset V(G)} {1+ d(u,v) \choose 2}$
(introduced by Randi\'c~\cite{Randic1993} and Klein, Lukovits, and 
Gutman~\cite{Klein1995}; see also \cite{GutmanAl1997,xu2014survey}), 
the generalised Wiener index $W^{\lambda}(G):=\sum_{\{u,v\}\subset V(G)} d(u,v)^{\lambda}$ 
where $\lambda \in \mathbb{R}$, and the reciprocal complementary Wiener index  
$RCW(G):=\sum_{\{u,v\}\subset V(G)} \frac{1}{d+1-d(u,v)}$, where $d$ is the diameter of $G$. 
We determine trees that minimise or maximise the above topological indices among all trees with a given eccentric sequence. 

We also obtain similar results for another generalisation of the Wiener
index, the \emph{$k$-Steiner Wiener} index $SW_k(G)$, defined as
\begin{align*}
SW_k(G)=\sum_{\substack{A\subseteq V(G) \\ |A|=k}} d(A)\,,
\end{align*}
where $d(A)$ denotes the \emph{Steiner distance} of $A$, i.e. the minimum size of 
a connected subgraph of $G$ whose vertex set contains $A$. 
The $k$-Steiner Wiener index was introduced by Li, Mao and Gutman \cite{LikSteiner2016}, 
but the closely related Steiner average distance, defined as ${|V(G)| \choose k} W_k(G)$,
had already been studied in \cite{DankelmannAvSteiner1996, DankelmannAvSteiner1997}.
For further results on the $k$-Steiner Wiener index see, for example, \cite{LikInverseSteiner2018, LUAll2018, JiZhang2019}. 

Our main results imply sharp lower or upper bounds on the above parameters for trees with given order and diameter, together
with a characterisation of the extremal trees.

\medskip
The rest of the paper is organised as follows. In Section \ref{Sec:Preliminaries} we 
present a tree modification that leaves the eccentric sequence of a tree unchanged.
This modification is central to the proofs of our main theorems. In Section~\ref{Sec:caterpillars} we obtain a lower or upper bound for $W(T;g)$ in terms of the eccentric sequence of a tree $T$, and also characterise cases of equality. 
Some corollaries on distance-based topological indices conclude this section. 
In Section~\ref{Sec:KSteinerW}, a lower bound on the $k$-Steiner Wiener index 
of trees with a given eccentric sequence is proved,  and we determine when the extremal
tree is unique. 
In Section~\ref{Sec:Diamet} we use results from the preceding sections to 
derive sharp bounds from the literature  
on the hyper- and generalised Wiener indices, and the Harary index with given order and diameter. We also obtain a known lower bound on the $k$-Steiner Wiener index, 
and determine when the extremal tree is unique, thereby correcting an error 
in the literature. 

%

\medskip

The notation we use is as follows. 
Let $T$ be a tree. By $N_T(v)$ (or simply $N(v)$), we mean the set of all neighbours of vertex $v$ 
in $T$. The path between two vertices $u$ and $w$ in $T$ will be called the $u-w$ path. A 
vertex of degree $1$ in $T$ is called a pendent vertex (or a leaf) of $T$. A pendent edge of 
$T$ is an edge incident with a pendent vertex. If $T_1$ and $T_2$ are trees, then we write $T_1=T_2$ to mean that $T_1$ and $T_2$ are isomorphic.
 
A tree is called a \emph{caterpillar} if a path $P$ remains when all leaves are deleted; 
this path $P$ is called the \emph{backbone} of the caterpillar.

\section{Preliminaries - eccentric sequences of trees}\label{Sec:Preliminaries} 

A sequence of positive integers is called a \emph{tree eccentric sequence} if it is the eccentric sequence of some tree. 
The study of eccentric sequences was initiated by Lesniak~\cite{Lesniak1975}, who 
also provided the following characterisation of tree eccentric sequences. 

\begin{theo}[\cite{Lesniak1975}]\label{LesniakTheo}
A nondecreasing sequence $a_1,a_2,\ldots,a_n$ of $n>2$ positive integers is a tree eccentric sequence if and only if
\begin{enumerate}[i)]
\item $a_1=a_n/2$ and $a_2\neq a_1$, or $a_1=a_2=(a_n+1)/2$ and $a_3\neq a_2$, 
\item for every $a_1<k \leq a_n$, there is $j\in \{2,3,\ldots, n-1\}$ such that $a_j=a_{j+1}=k$.
\end{enumerate}
\end{theo}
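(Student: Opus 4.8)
The plan is to prove both implications from two classical facts about a tree $T$, writing $r=a_1$ for the prospective radius and $d=a_n$ for the prospective diameter. The first fact is that adjacent vertices of a tree have eccentricities differing by \emph{exactly} one: deleting the edge $uv$ splits $T$ into the component $T_u$ containing $u$ and the component $T_v$ containing $v$, and a vertex of $T$ farthest from $u$ lies in exactly one of these, giving either $ec(v)=ec(u)+1$ or $ec(v)=ec(u)-1$. The second fact is that if $p_0p_1\cdots p_d$ is a path of maximum length in $T$, then $ec(x)=\max\bigl(d(x,p_0),d(x,p_d)\bigr)$ for every vertex $x$; in particular $ec(p_j)=\max(j,d-j)$. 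Applying the second fact to a vertex of minimum eccentricity gives $r=\lceil d/2\rceil$, hence $d\in\{2r,2r-1\}$, and moreover exactly one vertex attains eccentricity $r$ when $d=2r$ while exactly two (adjacent) vertices do when $d=2r-1$, since any vertex $v$ with $ec(v)=r$ satisfies $d(v,p_0)+d(v,p_d)=d$ and therefore lies on the path $P=p_0p_1\cdots p_d$.

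For the necessity, I would then read off (i) and (ii) directly. If $d=2r$ then $a_1=r=a_n/2$ and $r$ occurs once, so $a_2\neq a_1$; if $d=2r-1$ then $a_1=a_2=r=(a_n+1)/2$ and $r$ occurs twice, so $a_3\neq a_2$. For (ii), fix $k$ with $r<k\le d$; then $k>d/2$, so $p_k$ and $p_{d-k}$ are distinct vertices with $ec(p_k)=ec(p_{d-k})=\max(k,d-k)=k$. Thus $k$ appears at least twice among the eccentricities, and since $k>a_1$ these occurrences, being consecutive in the sorted sequence, include a pair of positions $j,j+1$ with $2\le j\le n-1$.

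For the sufficiency, I would construct a caterpillar realising the sequence; note that (i) together with $n>2$ forces $d\ge 2$. Start with the path $P=p_0p_1\cdots p_d$, whose eccentricities are $\max(j,d-j)$, and let $m_k$ denote the multiplicity of $k$ in $(a_i)$. By (i) the path already realises the value $r$ with the correct multiplicity ($1$ if $d=2r$, $2$ if $d=2r-1$), while for each $k$ with $r<k\le d$ it realises $k$ exactly twice, and by (ii) $m_k\ge 2$. It remains to add, for each such $k$, a further $m_k-2\ge 0$ vertices of eccentricity $k$, which is consistent since $\sum_{k=r+1}^{d}(m_k-2)=n-(d+1)$, the number of vertices still to be placed. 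To create a vertex of eccentricity $k$, attach a pendent vertex to the internal path vertex $p_j$ with $\max(j,d-j)=k-1$ (possible because $k-1\in\{r,\ldots,d-1\}$, which also gives $1\le j\le d-1$); a short distance computation, using $1\le j\le d-1$, then shows that the new vertex has eccentricity $1+\max(j,d-j)=k$, that no pairwise distance exceeds $d$, and that the eccentricity of every vertex already present is unchanged. Carrying out all these attachments yields a tree with eccentric sequence $(a_i)$.

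The part I expect to require the most care is this last one: verifying that each pendent attachment simultaneously keeps the diameter equal to $d$ and disturbs no eccentricity, including those of leaves attached at earlier stages, and organising the argument so that the parity regimes $d=2r$ and $d=2r-1$ are treated uniformly rather than case-by-case. The two preliminary tree facts are standard but need to be proved cleanly, since the whole argument rests on them.
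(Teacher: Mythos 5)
The paper itself offers no proof of this statement --- it is quoted from Lesniak (1975) --- so there is no internal argument to compare against. Judged on its own, your proposal is essentially correct and follows the standard route: necessity via the fact that $ec(x)=\max\bigl(d(x,p_0),d(x,p_d)\bigr)$ for a longest path $p_0\cdots p_d$ (precisely the Lemma~1 of Lesniak that the paper also invokes in Section~2), and sufficiency via a caterpillar construction that produces exactly trees of the shape $\mathbf{T}(r;m_2,\ldots,m_l)$ used throughout the paper. The distance checks you defer do go through: for $1\le j,j'\le d-1$ one has $|j-j'|+1\le\max(j,d-j)$, which gives the new leaf eccentricity $1+\max(j,d-j)=k$, keeps all pairwise distances at most $d$, and leaves every existing eccentricity unchanged; and the vertex count works out because (i) and (ii) already force $n\ge d+1$, so $\sum_{k>r}(m_k-2)=n-(d+1)\ge 0$.

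Two corrections, though. Your ``first fact'' is false as stated: adjacent vertices of a tree need not have eccentricities differing by exactly one. In the path on four vertices the two middle vertices are adjacent and both have eccentricity $2$; in general, the two central vertices of any tree of odd diameter are adjacent with equal eccentricity. Your justification breaks precisely at the inference that locating a vertex farthest from $u$ determines $ec(v)$ --- the eccentricity of $v$ may be attained by a different vertex. Fortunately you never actually use this fact (the value of the radius, the count of central vertices, and condition (ii) all follow from the second fact alone), so it should simply be deleted rather than repaired. Second, in the odd-diameter case your claim that $ec(v)=r$ forces $d(v,p_0)+d(v,p_d)=d$ needs the one-line parity observation that $d(v,p_0)+d(v,p_d)=d+2\,d(v,P)$, which rules out the a priori possible value $d+1$; with that added, the argument that the central vertices lie on $P$ (one of them if $d=2r$, two adjacent ones if $d=2r-1$) is complete.
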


\medskip
Since there may be many vertices with the same eccentricity, we denote by 
$b_1,b_2,\ldots,b_l$ the different eccentricities in a tree $T$ in 
increasing order, and by $m_1,m_2,\ldots,m_l$ the number of vertices whose eccentricity is 
$b_1,b_2,\ldots,b_l$, respectively. 
By Theorem~\ref{LesniakTheo}, $b_1,b_2,\ldots,b_l$ are consecutive positive integers and
$m_1\in \{1,2\}$. In particular, the values of $b_1,m_2,\ldots,m_l$ completely determine the eccentric sequence of $T$: its radius is $b_1$, the diameter is $b_1+l-1$ and
so $l=\lceil \frac{d}{2}\rceil +1$ if $d$ is the diameter. Moreover, the number of
centre vertices, $m_1$, is either $1$ (if 
$b_1+l-1$ is even) or $2$ (if $b_1+l-1$ is odd). Hence, we usually write 
$S:=(b_1; m_2,m_3,\ldots,m_l)$ for a tree eccentric sequence. Given a tree 
eccentric sequence $S$,
we write $n_S$, $r_S$ and $d_S$, respectively, for $m_1+m_2+\cdots +m_l$, $b_1$, 
and $b_{l}$. In other words,  $n_S$, $r_S$ and $d_S$ are the order, radius and diameter of a tree realising $S$, respectively. Where there is no danger of confusion, we drop the subscript $S$. Throughout, we assume that $n>2$. 

\begin{defi}
For integers $b_1>0$ and $m_2,\ldots,m_l>1$, we define $\mathbf{T}(b_1; m_2, m_3,\ldots,m_l)$ as the tree obtained from the path $P:=v_0,v_1,\ldots,v_{b_1+l-1}$ by attaching $m_j-2$ pendent edges to vertex $v_{l+1-j}$ for all $2\leq j \leq l$.
\end{defi}

  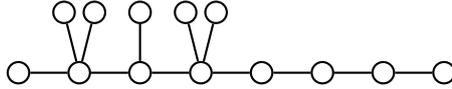
\begin{figure}[h]
  \begin{center}
\begin{tikzpicture}
  [scale=0.4,inner sep=1mm, 
   vertex/.style={circle,thick,draw}, 
   thickedge/.style={line width=2pt}] 
    \node[vertex] (a1) at (0,0) [fill=white] {};
    \node[vertex] (a2) at (2,0) [fill=white] {};
    \node[vertex] (a3) at (4,0) [fill=white] {};
    \node[vertex] (a4) at (6,0) [fill=white] {};
    \node[vertex] (a5) at (8,0) [fill=white] {};
    \node[vertex] (a6) at (10,0) [fill=white] {};
    \node[vertex] (a7) at (12,0) [fill=white] {};
    \node[vertex] (a8) at (14,0) [fill=white] {};    
    \node[vertex] (b1) at (1.5,2) [fill=white] {};
    \node[vertex] (b2) at (2.5,2) [fill=white] {};
    \node[vertex] (b3) at (4,2) [fill=white] {};
    \node[vertex] (b4) at (5.5,2) [fill=white] {};
    \node[vertex] (b5) at (6.5,2) [fill=white] {};

   \draw[thick,black] (a1)--(a2)--(a3)--(a4)--(a5)--(a6)--(a7)--(a8);
    \draw[thick, black] (b1)--(a2)  (b2)--(a2)  
                      (b3)--(a3)   (b4)--(a4)   (b5)--(a4);            
\end{tikzpicture}
\end{center}
\caption{The tree $\mathbf{T}(4;4,3,4)$.}
\label{fig:standard-tree}
\end{figure}

\begin{defi}
For a tree eccentric sequence $S$, we denote the set of all trees with eccentric 
sequence $S$ by $\mathcal{T}_S$. The set of all caterpillars with eccentric
sequence $S$ is denoted by $\mathcal{C}_S$. 
\end{defi}

The following tree modification was already used in~\cite{AudacePeter}. 

\begin{defi}\label{StandDecomp}
Let $T$ be a tree with diameter $d$, that is not a caterpillar. Denote by $P=v_0,v_1,\ldots,v_d$ a longest path in $T$. Then $P$ contains a vertex $v_j$ that has a non-leaf neighbour $u$ not on $P$. We fix $P$ and $u$. Without loss of generality, assume that $j \geq \frac{1}{2}d$. Denote by $U$ the set of vertices that are in a component of $T-u$ not containing $P$. Let $L$ and $R$ be the set of those vertices in $V(T)-U$ that are in the component of $T-v_{j}v_{j+1}$ containing $v_j$ and $v_{j+1}$, respectively. 

Define $T'$ as the tree constructed from $T$ by replacing the edge $uz$ by the edge 
$v_{j+1}z$ for every neighbour $z$ of $u$ in $U$. The tree $T'$ will be referred to as the mate of $T$ with respect to $P$ and $u$, or, if $P$ and $u$ are clear from the context, as the 
mate of $T$.
\end{defi}

It is well-known that in a tree $T$, the eccentricity of a vertex $v$ is the maximum of 
$d_T(u,v)$ and $d_T(v,w)$, where $u-w$ is a longest path in $T$ (see~\cite[Lemma~1]{Lesniak1975}). Using this fact, it is easy to see that for a tree $T$ that is not a
caterpillar, the eccentricity of $v$ in $T$ equals its eccentricity in its mate $T'$. 
Hence we have the following lemma:

\begin{la}[\cite{AudacePeter}] \label{TTprimSameS}
Let $T$ be a tree that is not a caterpillar. If $T'$ is the mate of $T$, then $T$ and $T'$ have the same eccentric sequence. 
\end{la}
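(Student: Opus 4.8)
The plan is to prove the slightly stronger statement that $ec_{T'}(v)=ec_T(v)$ for every vertex $v$; since $T$ and $T'$ have the same vertex set, this is equivalent to $T$ and $T'$ having the same eccentric sequence. The only tool needed is the fact recalled just above the lemma: in any tree, the eccentricity of a vertex is the larger of its distances to the two endpoints of a longest path. Throughout I would keep the notation of Definition~\ref{StandDecomp} --- the longest path $P=v_0,v_1,\dots,v_d$, the vertex $v_j$ with $j\ge\frac12 d$, its non-leaf neighbour $u\notin V(P)$, and the sets $U$, $L$, $R$ --- and note first that $1\le j\le d-1$, since $v_0$ and $v_d$ are leaves, so that $v_{j+1}$ exists. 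The argument then has two parts: show that $P$ is still a longest path of $T'$, and then, for each vertex $v$, compare $d_{T'}(v,v_0)$ and $d_{T'}(v,v_d)$ with $d_T(v,v_0)$ and $d_T(v,v_d)$.

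For the first part I would record how the modification affects distances. A pair of vertices lying both in $V(T)\setminus U=L\cup R$, or both in $U$, has the same distance in $T$ and in $T'$. For $v\in U$ and $x\notin U$, the branch at $u$ having simply been reattached to $v_{j+1}$, one has $d_{T'}(v,v_{j+1})=d_T(v,u)$, and since in $T$ the path from $v$ to $x$ runs through $u$ while in $T'$ it runs through $v_{j+1}$, this gives $d_{T'}(v,x)-d_T(v,x)=d_T(v_{j+1},x)-d_T(u,x)$, which equals $0$ when $x\in L\setminus\{u\}$, equals $+2$ when $x=u$, and equals $-2$ when $x\in R$. Thus the only distances that increase are $d_{T'}(v,u)=d_T(v,u)+2$ for $v\in U$; and since $P$ is diametral in $T$ one has $d_T(v,u)+j+1=d_T(v,v_0)\le d$, whence $d_{T'}(v,u)\le d-j+1\le d$. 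Hence $\operatorname{diam}(T')=d$, so $P$ is a longest path of $T'$ as well.

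Applying the quoted fact in $T$ and in $T'$ then reduces everything to the distances to $v_0$ and $v_d$. For $v\in L$ and for $v\in R$ both of these are unchanged, so $ec_{T'}(v)=ec_T(v)$ at once. The only case with any content is $v\in U$: there $d_{T'}(v,v_0)=d_T(v,v_0)=d_T(v,u)+j+1$ is unchanged, while $d_{T'}(v,v_d)=d_T(v,u)+d-j-1$ has dropped by $2$, and one checks that the distance to $v_0$ dominates the distance to $v_d$ in both trees: in $T$ because $j\ge\frac12 d$ gives $j+1\ge(d-j)+1$, and in $T'$ because $j\ge\frac12 d-1$ gives $j+1\ge d-j-1$. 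Therefore $ec_{T'}(v)=d_{T'}(v,v_0)=d_T(v,v_0)=ec_T(v)$, which finishes the proof.

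The argument is essentially bookkeeping; the one delicate point --- and the reason the normalisation $j\ge\frac12 d$ is built into Definition~\ref{StandDecomp} --- is precisely the case $v\in U$. Shifting the branch $U$ one step along $P$ toward $v_d$ shortens its distances to the $v_d$-side by $2$, and the only danger is that this might lower the eccentricity of some vertex of $U$; attaching $U$ on the half of $P$ nearer to $v_d$ prevents this, because then the eccentricities of the vertices of $U$ are realised at $v_0$ both before and after the move. So I expect the verification that $\operatorname{diam}(T')=d$, together with this last domination step, to be where whatever small difficulty there is lies, with everything else routine.
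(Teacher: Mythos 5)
Your proposal is correct and follows essentially the same route as the paper, which derives the lemma from the cited fact that in a tree the eccentricity of a vertex equals the larger of its distances to the two ends of a longest path; you simply carry out in detail the bookkeeping the paper leaves implicit (the $0/\pm 2$ distance changes, that $P$ remains a longest path in $T'$, and that for $v\in U$ the eccentricity is attained at $v_0$ in both trees thanks to $j\ge\tfrac12 d$). No gaps.
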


\section{Maximising or minimising Wiener-type indices of trees with a given eccentric sequence}\label{Sec:caterpillars}

Our main theorem of this section reads:

\begin{theo}\label{Theo:MinGenr} 
Let $T$ be a tree with eccentric sequence $S=(r;m_2,m_3,\ldots,m_l)$. \\
(a) Let  $g: \mathbb{N} \rightarrow \mathbb{R}$ be nonnegative and nondecreasing.
Then  
\[ W(T;g)\geq W(\mathbf{T}(r; m_2, m_3,\ldots,m_l);g). \] 
If, in addition, $g$ is strictly increasing, then equality holds if and only if 
$T=\mathbf{T}(r; m_2,\ldots,m_l)$. \\[1mm] 
(b) Let  $g: \mathbb{N} \rightarrow \mathbb{R}$ be nonnegative and nonincreasing.
Then  
\[ W(T;g)\leq W(\mathbf{T}(r; m_2, m_3,\ldots,m_l);g). \] 
If, in addition, $g$ is strictly decreasing, then equality holds if and only if 
$T = \mathbf{T}(r; m_2,\ldots,m_l)$. 
\end{theo}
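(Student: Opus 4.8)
The plan is to prove part~(a) in two stages --- a reduction to caterpillars, then a closed-form analysis inside $\mathcal{C}_S$ --- with part~(b) following verbatim after reversing every inequality and replacing ``nondecreasing/strictly increasing'' by ``nonincreasing/strictly decreasing''. For the reduction, suppose $T\in\mathcal{T}_S$ is not a caterpillar and let $T'$ be its mate with respect to a longest path $P=v_0,\dots,v_d$ and the vertex $u$ of Definition~\ref{StandDecomp}; then $T'\in\mathcal{T}_S$ by Lemma~\ref{TTprimSameS}. I would first note that the vertex $v_j$ of $P$ carrying the non-leaf off-path neighbour $u$ must satisfy $2\le j\le d-2$: if, say, $v_1$ had a non-leaf neighbour off $P$, then a further neighbour of it would lengthen $P$. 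Combined with $j\ge\frac12 d$ this gives $\{v_{j+1},\dots,v_d\}\subseteq R$, so $|R|\ge2$. A comparison of distances pair by pair then shows that distances are unchanged on all vertex pairs except that, for $x\in U$, the distance to $u$ increases by $2$ and the distance to each $y\in R$ decreases by $2$; since $d_T(x,v_{j+1})=d_T(x,u)+2$, the pairs $\{x,u\}$ and $\{x,v_{j+1}\}$ cancel and one is left with
\[
W(T';g)-W(T;g)=\sum_{x\in U}\ \sum_{y\in R\setminus\{v_{j+1}\}}\bigl(g(d_T(x,y)-2)-g(d_T(x,y))\bigr)\le 0,
\]
which is strictly negative when $g$ is strictly increasing, since $U\ne\emptyset$ and $R\setminus\{v_{j+1}\}\ne\emptyset$. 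Applying this with $g=\mathrm{id}$ shows the ordinary Wiener index strictly decreases under taking mates, so iterating terminates at a caterpillar $C\in\mathcal{C}_S$ with $W(C;g)\le W(T;g)$, the inequality being strict (for strictly increasing $g$) whenever $T$ is not already a caterpillar. Hence it suffices to prove the theorem within $\mathcal{C}_S$.

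For the caterpillar case I would parametrise $C\in\mathcal{C}_S$ by the numbers $F_i\ge0$ of pendant (\emph{free}) leaves attached to $v_i$ in the path $v_0,v_1,\dots,v_d$, for $1\le i\le d-1$. An inspection of eccentricities shows that membership in $\mathcal{C}_S$ is equivalent to prescribing, for each \emph{folded pair} $\{i,d-i\}$, the value of $F_i+F_{d-i}$ (and the single value $F_{d/2}$ when $d$ is even). Splitting the vertex pairs of $C$ into path--path, path--leaf and leaf--leaf pairs yields
\[
W(C;g)=\mathrm{const}(S)+\sum_{i=1}^{d-1}F_i\,\psi(i)+\tfrac12\sum_{i,i'=1}^{d-1}F_iF_{i'}\,g\bigl(2+|i-i'|\bigr),\qquad \psi(i):=\sum_{j=0}^{d}g\bigl(1+|i-j|\bigr).
\]
The crucial observation is that $\psi(i)=\psi(d-i)$, so the linear term is also constant on $\mathcal{C}_S$, and everything reduces to minimising the quadratic form $Q(\mathbf F):=\sum_{i,i'}F_iF_{i'}\,g(2+|i-i'|)$ subject to the prescribed folded-pair sums.

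To minimise $Q$ I would split it into the (constant) contribution of the middle position, the contributions of leaf pairs lying within a single folded pair, and those of leaf pairs lying in two distinct folded pairs. With $h(t):=g(2+t)$, which is nondecreasing, the within-pair part of $\{i,d-i\}$ equals $h(0)s^2+2\bigl(h(d-2i)-h(0)\bigr)F_iF_{d-i}$ where $s$ is the prescribed sum, and since $h(d-2i)\ge h(0)$ this is minimised when all of $s$ sits on one side; the part of $\{i,d-i\}$ against $\{i',d-i'\}$ takes the form $h(D)ss'+\bigl(h(\delta)-h(D)\bigr)(F_iF_{i'}+F_{d-i}F_{d-i'})$ with $0<\delta=|i-i'|<D=d-i-i'$, and since $h(\delta)\le h(D)$ this is minimised when the two pairs are aligned to the same side. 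The configuration $\mathbf F^{\mathbf T}$ belonging to $\mathbf{T}(r;m_2,\dots,m_l)$ --- all free leaves at the lower-index ends --- realises every one of these minima simultaneously, so $W(C;g)\ge W(\mathbf T;g)$, which is~(a). For the equality statement, if $g$ (hence $h$) is strictly increasing then each of the displayed inequalities is strict as soon as the corresponding folded pair is split, respectively two folded pairs are misaligned; and $C\not\cong\mathbf T$ forces $\mathbf F^C$ to be neither $\mathbf F^{\mathbf T}$ nor its mirror image, so at least one of these situations occurs and $W(C;g)>W(\mathbf T;g)$.

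I expect the bulk of the work, and the only real pitfalls, to lie in this caterpillar analysis: pinning down exactly which folded-pair sums are imposed by $S$ (uniformly in the parity of $d$, and accounting for the compulsory end-leaves $v_0,v_d$ and for the fixed middle position when $d$ is even), checking the symmetry $\psi(i)=\psi(d-i)$ and the sign bookkeeping for $Q$, and confirming that ``no folded pair split and all folded pairs aligned'' is precisely the isomorphism class of $\mathbf{T}(r;m_2,\dots,m_l)$ --- this last point being what makes the uniqueness assertion come out cleanly. The reduction to caterpillars is, by comparison, a short distance computation once one observes that $|R|\ge2$.
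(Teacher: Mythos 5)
Your proposal is correct and follows essentially the same route as the paper: the mate operation (with Lemma~\ref{TTprimSameS}) reduces the problem to caterpillars exactly as in Lemma~\ref{Lem:GeneralisedW}, and your quadratic form in the leaf counts $F_i$ with fixed folded-pair sums is just a reformulation of Lemma~\ref{la:WienerTypehasCater}'s decomposition into the sets $D_j=C_j\cup C_{d-j}$, with the same constant path--path and path--leaf contributions, the same within-pair and cross-pair inequalities, and the same alignment-based equality analysis. The only cosmetic slip is that your displayed formula for $W(C;g)$ includes the diagonal terms $\tfrac12 F_i^2 g(2)$, so the self-pair overcount $\tfrac12 g(2)\sum_i F_i$ must be absorbed into $\mathrm{const}(S)$ (harmless, since $\sum_i F_i=n-d-1$ is determined by $S$).
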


The proof of Theorem 2 proceeds in two steps. In the first step, Lemma 2, we show that among the trees $T \in \mathcal{T}_S$ that minimise (or maximise) $W(T;g)$ there is always a caterpillar. In the second step, we prove that among all caterpillars $T \in \mathcal{C}_S$, the caterpillar $\mathbf{T}(r; m_2,\ldots,m_l)$ minimises or maximises $W(T;g)$. 

\begin{la}\label{Lem:GeneralisedW}
Let $S$ be a tree eccentric sequence and $g: \mathbb{N} \rightarrow \mathbb{R}$ a nonnegative function. \\[1mm]  
(a) Let $g$ be nondecreasing and ${\mathcal T}^{min}_S$ the set of all trees that minimise $W(T,g)$ among all trees $T \in {\mathcal T}_S$. Then the set 
${\mathcal T}^{min}_S$ contains a caterpillar. Moreover, if $g$ is strictly increasing, then ${\mathcal T}^{min}_S$ contains only caterpillars. \\[1mm]  
(b) Let $g$ be nonincreasing and ${\mathcal T}^{max}_S$ the set of all trees that maximise $W(T,g)$ among all trees $T \in {\mathcal T}_S$. Then the set 
${\mathcal T}^{max}_S$ contains a caterpillar. Moreover,  if $g$ is strictly decreasing, then ${\mathcal T}^{max}_S$ contains only caterpillars. 
\end{la}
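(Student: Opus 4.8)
The plan is to prove that the mate operation from Definition~\ref{StandDecomp} does not increase $W(T;g)$ when $g$ is nondecreasing (and does not decrease it when $g$ is nonincreasing), and then iterate: since a tree that is not a caterpillar always admits a mate with the same eccentric sequence (Lemma~\ref{TTprimSameS}), and the mate strictly decreases some integer-valued potential (say, the number of non-leaf vertices off a fixed longest path, or simply the number of edges of $T$ minus the length of a longest path), repeatedly applying the operation terminates at a caterpillar with $W(\cdot;g)$ no larger than the original. This immediately gives that $\mathcal{T}^{min}_S$ contains a caterpillar; the strict version follows because, when $g$ is strictly increasing, the mate operation strictly decreases $W(T;g)$ whenever $T$ is not already a caterpillar, so no non-caterpillar can be a minimiser. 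Part~(b) is entirely symmetric, replacing ``nondecreasing/increasing'' by ``nonincreasing/decreasing'' and reversing the inequality.

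The heart of the matter is the local inequality $W(T';g)\le W(T;g)$ for the mate $T'$. I would set up the bijection on unordered pairs of vertices induced by the identity on $V(T)=V(T')$ and compare $g(d_T(x,y))$ with $g(d_{T'}(x,y))$ pair by pair. With the notation of Definition~\ref{StandDecomp}, write $V(T)=L\cup R\cup U$, where $U$ hangs off $u\in L$ via the edges $uz$ (which in $T'$ become $v_{j+1}z$). Distances within $L$, within $R$, and within $U$ are unchanged; distances between $L$ and $R$ are unchanged (the edge $v_jv_{j+1}$ is untouched); and for $x\in R$, $y\in U$ the distance \emph{strictly decreases} when we move $U$ from $u$ to $v_{j+1}$, since the $u$–$v_{j+1}$ path has positive length and $v_{j+1}$ lies strictly closer to every vertex of $R$ than $u$ does. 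The only pairs whose distance might grow are $x\in L$, $y\in U$: here $d_{T'}(x,y)=d_T(x,v_{j+1})+d_U$-part, which can exceed $d_T(x,u)+d_U$-part. The key balancing observation — and this is the step I expect to be the main obstacle — is that one must pair up the ``bad'' $L$–$U$ pairs with ``good'' $R$–$U$ pairs (or exploit the assumption $j\ge\frac12 d$, which guarantees $|R|$ together with the portion of $L$ reachable from $v_{j+1}$ is large enough) so that each increase of $g$ on an $L$–$U$ pair is compensated by a decrease on a corresponding $R$–$U$ pair. Concretely, for fixed $y\in U$ one compares $\sum_{x\in L} g(d_T(x,y)) + \sum_{x\in R} g(d_T(x,y))$ before and after; since $g$ is nondecreasing and the multiset of $L$-to-$u$ distances versus $R$-to-$v_{j+1}$ distances interlaces appropriately (using that $v_0,\dots,v_d$ is a \emph{longest} path, so no branch off $L$ reaches farther from $v_j$ than $v_0$ does, and $j\ge d/2$ forces the $v_{j+1}$-side to be at least as long), the net change is $\le 0$ term by term after a suitable matching.

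Finally, for the equality discussion: when $g$ is strictly increasing, as long as $T$ is not a caterpillar there is at least one pair (an $R$–$U$ pair, which exists because $R\ne\emptyset$ and $U\ne\emptyset$ whenever a mate operation is available, noting $U$ contains a non-leaf neighbour of $u$) whose distance strictly drops, so $g$ drops strictly on it while all other contributions are non-increasing; hence $W(T';g)<W(T;g)$, and no non-caterpillar lies in $\mathcal{T}^{min}_S$. For part~(b) with $g$ strictly decreasing, the same strict drop in distance on an $R$–$U$ pair now yields a strict \emph{increase} in $g$, so $W(T';g)>W(T;g)$ and again no non-caterpillar is a maximiser. I would state the local comparison as a sub-lemma, prove it once for nondecreasing $g$, and deduce the nonincreasing case by noting that if $g$ is nonincreasing then $-g$ (or $g_{\max}-g$ for a suitable constant) is nondecreasing, or simply by reversing every inequality in the argument.
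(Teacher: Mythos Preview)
Your overall strategy---apply the mate operation of Definition~\ref{StandDecomp}, show $W(T';g)\le W(T;g)$, iterate to a caterpillar using the fact that the number of leaves strictly increases---is exactly what the paper does. The gap is in your local distance analysis, and it is precisely the step you flag as ``the main obstacle.''

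You assert that for $x\in L$ and $y\in U$ the distance ``might grow,'' and you then propose a matching/interlacing argument (using $j\ge d/2$) to offset these bad pairs against good $R$--$U$ pairs. But in fact for every $x\in L\setminus\{u\}$ the distance is \emph{unchanged}: since $u$ is a neighbour of $v_j$, the $x$--$u$ path in $T$ passes through $v_j$, so $d_T(x,u)=d_T(x,v_j)+1$; and in $T'$ the $x$--$y$ path passes through $v_j$ and then $v_{j+1}$, giving $d_{T'}(x,y)=d_T(x,v_j)+1+d_T(u,y)=d_T(x,y)$. The \emph{only} pair whose distance increases is $(u,y)$ for $y\in U$, and it increases by exactly $2$. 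Meanwhile every $R$--$U$ distance drops by exactly $2$. In particular the pair $(v_{j+1},y)$ drops by $2$, and one checks that $\{d_T(u,y),d_T(v_{j+1},y)\}=\{d_{T'}(u,y),d_{T'}(v_{j+1},y)\}$ as multisets, so these two contributions cancel perfectly. What remains is
\[
W(T';g)-W(T;g)=\sum_{a\in U}\ \sum_{b\in R\setminus\{v_{j+1}\}}\bigl[g(d_T(a,b)-2)-g(d_T(a,b))\bigr]\le 0,
\]
with strict inequality when $g$ is strictly increasing, since $R\setminus\{v_{j+1}\}\neq\emptyset$ (indeed $P$ being a longest path forces $j\le d-2$, so $v_{j+2}\in R$). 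No matching, no interlacing, and the hypothesis $j\ge d/2$ is not used for the inequality at all---it is only there to guarantee (via Lemma~\ref{TTprimSameS}) that the eccentric sequence is preserved. Once you correct the distance bookkeeping your proof collapses to the paper's.
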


\begin{proof}
We only prove part (a) since the proof of part (b) is almost identical. \\
If the set $\mathcal{T}_S^{min}$ contains only caterpillars, then there is nothing to 
prove. So assume that the set $\mathcal{T}_S^{min}$ contains a tree $T$ 
that is not a caterpillar. Following Definition~\ref{StandDecomp}, fix $P$, $v_j$ and 
$u$, and let $T'$ be the mate of $T$, and let $L, R, U$ be as defined there. 
By Lemma~\ref{TTprimSameS} we have $T'\in \mathcal{T}_S$. For $a,b \in V(T)$, it is easy to see that $d_{T'}(a,b) \neq d_{T}(a,b)$ only if  $a\in U$ and $b \in R\cup \{u\}$ (or vice versa), and that
\[ d_{T'}(a,b) = \left\{ \begin{array}{cc}
d_{T}(a,b) - 2 & \textrm{if $a\in U$ and $b \in R$,} \\
d_{T}(a,b) + 2 & \textrm{if $a\in U$ and $b=u$.} 
\end{array} \right. \] 

Note that for each $a\in U$, the sets $\{d_T(a,u), d_T(a,v_{j+1}) \}$ 
and  $\{d_{T'}(a,u), d_{T'}(a,v_{j+1}) \}$ coincide.  Hence
\begin{eqnarray}
W(T';g)- W(T;g) 
  & =& 
    \sum_{a \in U, \ b \in R-\{v_{j+1}\} } g(d_{T'}(a,b)) -  g(d_{T}(a,b)) 
     \nonumber \\
 & = &   
    \sum_{a \in U, \ b \in R-\{v_{j+1}\} } g(d_{T}(a,b)-2) -  g(d_{T}(a,b))  
    \nonumber\\
&  \leq &   0,   \label{eq:compare-T-and-T'} 
\end{eqnarray} 
with the last inequality holding since $g$ is nondecreasing. 
It follows that $W(T';g)- W(T;g)\leq 0$, and since 
$W(T;g)$ was minimum, we conclude that $W(T';g)= W(T;g)$. 
Note that $T'$ has more leaves than $T$. Hence, after a finite number of iterations 
of this operation we obtain a caterpillar $C$ with $W(C;g)=W(T;g)$, and thus  
$C \in \mathcal{T}_S^{min}$. This proves the first part of (a).

To prove the second part of (a) note that for $g$ strictly increasing, the inequality~\eqref{eq:compare-T-and-T'} is strict and we get $W(T';g) < W(T;g)$. This contradiction to 
our assumption that $W(T;g)$ is minimum proves that $T$ is a caterpillar. The second part of (a) follows.  
\end{proof}

Lemma \ref{Lem:GeneralisedW} shows that for a proof of a lower bound (if $g$ is nondecreasing) or upper bound (if $g$ is nonincreasing) on $W(T;g)$ for trees $T$ with a given eccentric sequence $S$, we can restrict ourselves to caterpillars. 
Lemma \ref{la:WienerTypehasCater} proves such bounds for caterpillars. The proof 
is a refinement of a method that was developed in~\cite{AudacePeter} for the 
ordinary Wiener index.

\begin{la}\label{la:WienerTypehasCater}
Let $g: \mathbb{N} \rightarrow \mathbb{R}$ be a nonnegative function and $S=(r;m_2,m_3,\ldots,m_l)$ a tree eccentric sequence. \\[1mm]
(a) If $g$ is nondecreasing, then 
\[ W(T;g) \geq W(\mathbf{T}(r; m_2,\ldots,m_l); g) \]
for every caterpillar $T \in \mathcal{C}_S$. 
If, in addition, $g$ is strictly increasing, then equality holds only if
$T = \mathbf{T}(r; m_2,\ldots,m_l)$. \\[1mm]
(b) If $g$ is nonincreasing, then 
\[ W(T;g) \leq W(\mathbf{T}(r; m_2,\ldots,m_l); g) \]
for every caterpillar $T \in \mathcal{C}_S$. 
If, in addition, $g$ is strictly decreasing, then equality holds only if
$T = \mathbf{T}(r; m_2, m_3,\ldots,m_l)$. 
\end{la}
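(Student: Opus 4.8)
The plan is to describe the caterpillars in $\mathcal{C}_S$ explicitly and then settle the caterpillar case by a concavity argument followed by a pairwise exchange. For a caterpillar $T\in\mathcal{C}_S$ I would fix a diametral path $v_0,v_1,\ldots,v_d$ with $d=d_S$. Deleting the leaves of a caterpillar leaves a path, so every leaf of $T$ is $v_0$, or $v_d$, or a pendent vertex attached to some $v_i$ with $1\le i\le d-1$; write $c_i$ for the number of these at $v_i$ (so $c_0=c_d=0$). By the eccentricity formula recalled before Lemma~\ref{TTprimSameS}, a pendent vertex at $v_i$ has eccentricity $\max(i,d-i)+1$; hence, for each ``level'' $k$ with $r\le k\le d-1$, the pendent vertices at the one or two path vertices $v_i$ with $\max(i,d-i)=k$ must total exactly $M_k:=m_{k-r+2}-2$ (here Theorem~\ref{LesniakTheo} and the relation $d=r+l-1$ are used, and note that the level $k=r$ has a single central position when $d$ is even and two positions when $d$ is odd). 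So a caterpillar in $\mathcal{C}_S$ is exactly a choice, for each level $k$ with two positions $v_{d-k}$ (the ``left'' one) and $v_k$, of how to split its $M_k$ pendent vertices between them, and $\mathbf{T}(r;m_2,\ldots,m_l)$ is precisely the caterpillar that places every level's pendent vertices at the left position.

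Writing $a_i:=\sum_{j=0}^{d}g(|i-j|+1)$, one then has
\[ W(T;g)=P+\sum_i c_i a_i+g(2)\sum_i\binom{c_i}{2}+\sum_{i<j}c_ic_j\,g(|i-j|+2), \]
where $P$ (the path--path contribution) is the same for all $T\in\mathcal{C}_S$, and, since $a_i=a_{d-i}$, the path--pendent term $\sum_i c_ia_i$ depends only on the numbers $M_k$, hence is also constant on $\mathcal{C}_S$. Thus only the pendent--pendent contribution matters, and the key geometric fact is: two pendent vertices attached at distinct levels $k,k'$ are at distance $|k-k'|+2$ if they lie on the same side of the centre and at distance $(k+k'-d)+2\ge|k-k'|+2$ if on opposite sides.

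Now the main step. Fix all $c_i$ except at one level $k_0$ with two positions and put $L:=c_{v_{d-k_0}}$, so $c_{v_{k_0}}=M_{k_0}-L$. A direct computation from the distance facts above shows that $W(T;g)$, regarded as a function of $L\in\{0,1,\ldots,M_{k_0}\}$, is a quadratic with leading coefficient $-(g(2k_0-d+2)-g(2))\le0$, hence concave, so its minimum over $L$ is attained at $L\in\{0,M_{k_0}\}$. Iterating over the levels, some $T\in\mathcal{C}_S$ minimising $W(\cdot;g)$ has, at every level, all its pendent vertices on one side; among such ``extreme'' caterpillars the only non-constant part of $W(\cdot;g)$ is $\sum_{k<k'}M_kM_{k'}\,g(\delta_{kk'}+2)$, where $\delta_{kk'}=|k-k'|$ or $k+k'-d$ according as levels $k$ and $k'$ lie on the same side or on opposite sides, and since $g$ is nondecreasing and $|k-k'|\le k+k'-d$ this is smallest when all levels are on one side, i.e.\ for $T=\mathbf{T}(r;m_2,\ldots,m_l)$ (its ``all-left'' and ``all-right'' forms being isomorphic); this proves (a). If, in addition, $g$ is strictly increasing, then $2k_0-d+2>2$ at every level with two positions, so the quadratic above is strictly concave and a non-extreme caterpillar is strictly beaten, while an extreme caterpillar different from $\mathbf{T}(r;m_2,\ldots,m_l)$ must have two pendent-carrying levels $k,k'$ on opposite sides, both exceeding $d/2$, whence $\delta_{kk'}=k+k'-d>|k-k'|$ and $W(T;g)>W(\mathbf{T}(r;m_2,\ldots,m_l);g)$. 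Part (b) follows by reversing every inequality, for nonincreasing, resp.\ strictly decreasing, $g$.

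I expect the main obstacle to be the first step: identifying precisely which path vertices of a caterpillar in $\mathcal{C}_S$ may carry pendent vertices and converting the eccentric sequence into the constraints $c_{v_{d-k}}+c_{v_k}=M_k$, with the parity-dependent behaviour at the centre handled correctly. Once that description is in place, the concavity observation and the pairwise-exchange step are routine, if slightly computational.
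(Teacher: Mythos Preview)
Your proposal is correct, and the overall architecture matches the paper's proof: fix a diametral path, observe that the path--path and path--pendent contributions depend only on $S$, and reduce everything to the pendent--pendent part, parameterised by how each eccentricity ``level'' splits its $M_k$ leaves between the two symmetric positions $v_{d-k}$ and $v_k$.

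The one genuine difference is in how the within-level analysis is handled. You keep the fine partition by attachment point $c_i$ and then argue that, with all other levels fixed, $W(T;g)$ is a concave quadratic in $L=c_{d-k_0}$ (leading coefficient $g(2)-g(2k_0-d+2)\le 0$), forcing the minimum to an endpoint; a second pairwise step then aligns the sides. The paper instead groups the two positions of a level into a single set $D_j=C_j\cup C_{d-j}$ and bounds in one stroke
\[
W_T(D_j)\ \ge\ g(2)\binom{|D_j|}{2},\qquad W_T(D_i,D_j)\ \ge\ g(j-i+2)\,|D_i|\,|D_j|,
\]
both with equality for $\mathbf{T}(r;m_2,\ldots,m_l)$. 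Your concavity computation is exactly the inequality $W_T(D_j)\ge g(2)\binom{|D_j|}{2}$ unpacked, and your second step is the cross bound $W_T(D_i,D_j)\ge g(j-i+2)|D_i||D_j|$; so the two arguments are equivalent, but the paper's coarser grouping lets both bounds be read off directly from ``the minimum possible distance between a vertex of $D_i$ and one of $D_j$ is $|i-j|+2$'', avoiding the two-stage reduction. Your route has the small advantage that the concavity viewpoint makes transparent \emph{why} splitting a level cannot help even though $\sum_i\binom{c_i}{2}$ alone would prefer a split.
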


\begin{proof} 
We only give a proof of part (a) since the proof of (b) is almost identical. \\
Let $T \in \mathcal{C}_S$. 
Let the backbone of $T$ be the path $v_1,\ldots,v_{d-1}$, so $d=r+l-1$. 
We may assume that $d>2$ since otherwise $T$ is a star and
$\mathcal{C}_S = \mathcal{T}_S = \{T\}$. 

We fix vertices $v_0 \in N(v_1)-\{v_2\}$ and $v_{d} \in N(v_{d-1})-\{v_{d-2}\}$. 
Then $P_0:=v_0, v_1,\ldots, v_{d}$ is a longest path in $T$. For every 
$j \in \{1,2,\ldots,d-1\}$, let $C_j$ be the set of leaves  adjacent to $v_j$ and not 
on $P_0$. For every $j \in \{1,2,\ldots, \lceil \frac{d-1}{2} \rceil\}$, set 
$D_j=C_j \cup C_{d-j}$. The set $D_j$ contains all vertices of eccentricity $d+1-j$, 
except two vertices that are on $P_0$. Hence $|D_j| = m_{l+1-j}-2$ for all 
$1\leq j \leq \lceil \frac{d-1}{2} \rceil$. Moreover, the sets 
$V(P_0), D_1, D_2,\ldots, D_{\lceil \frac{d-1}{2} \rceil}$ partition $V(T)$.
	
If $X$ and $Y$ are subsets of $V(T)$, then we write $W_T(X)$ for the sum of $g(d_T(x,y))$ 
taken over all $\{x,y\}\subseteq X$, and $W_T(X,Y)$ for the sum of $g(d_T(x,y))$ taken over 
all $x\in X$ and $y\in Y$. With this notation, $W(T;g)$ can be expressed as
\begin{eqnarray}
W(T;g)  & = & W_T(V(P_0))  + \sum_{j=1}^{\lceil (d-1)/2 \rceil} W_T(D_j,V(P_0))
 \nonumber \\
& & + \sum_{j=1}^{\lceil (d-1)/2 \rceil} W_T(D_j)
+ \sum_{1 \leq i < j \leq \lceil (d-1)/2 \rceil} W_T(D_i, D_j)\,. \label{GeneralWiener}
\end{eqnarray}
We now bound the four terms of the right hand side of \eqref{GeneralWiener} separately and
show that each of these terms is minimised by $\mathbf{T}(r; m_2, m_3,\ldots,m_l)$.
 
Clearly,  $W_T(V(P_0)) = W(P_0;g)$ and so 
the first term depends only on $d$ and $g$, but not on the choice of $T$. 

We now consider the second term. Let 
$w\in V(P_0)$. Then $d_T(v,w) =1 +d_T(v_j,w)$ for $v\in C_j$, and 
$d_T(v,w) =1 +d_T(v_{d-j},w)$ for $v\in C_{d-j}$. Moreover for fixed $j$, 
\begin{align*}
\sum_{w \in V(P_0)} g\big((1+d_T(v_j,w)) \big)=\sum_{w \in V(P_0)} g\big ((1+d_T(v_{d-j},w)) \big)
\end{align*}
by symmetry. It follows that
\begin{align*}
W_T(D_j,V(P_0))= |D_j|\sum_{w \in V(P_0)} g\big((1+d_T(v_j,w))\big)\,,
\end{align*}
and so 
\[
\sum_{j=1}^{\lceil (d-1)/2 \rceil} W_T(D_j,V(P_0)) 
= \sum_{j=1}^{\lceil (d-1)/2 \rceil} |D_j|\sum_{w \in V(P_0)} g\big((1+d_{P_0}(v_j,w))\big). 
\]
Since $|D_j| = m_{l+1-j}-2$ for all $1\leq j \leq \lceil \frac{d-1}{2} \rceil$, 
the second term in~\eqref{GeneralWiener} depends only on the sequence $S$ and the 
function $g$, and not on the choice of $T$. 

To bound the third term, note that $d_T(u,v)\geq 2$ for all $u,v\in D_j$ such that 
$u\neq v$. Since $g$ is nondecreasing,
\begin{align}\label{BoundDj}
W_T(D_j) \geq g(2) {|D_j| \choose 2}. 
\end{align}
Equality holds in~\eqref{BoundDj} if all elements of $D_j$ are adjacent to the same 
vertex of $P_0$, i.e. if $C_j=\emptyset$ or $C_{d-j}=\emptyset$. Hence, if 
$T=\mathbf{T}(r; m_2, m_3,\ldots,m_l)$, then equality holds in~\eqref{BoundDj} for all $j$.
On the other hand, if $g$ is strictly increasing, then equality holds in~\eqref{BoundDj} only if $C_j=\emptyset$ or $C_{d-j}=\emptyset$. 

To bound the fourth term, let $v \in D_i$ and $w\in D_j$ where $i<j$. Denote by 
$v'$ (resp. $w'$) the vertex adjacent to $v$ (resp. $w$) in $T$. Then 
$v'\in \{v_i, v_{d-i}\}$, $w'\in \{v_j, v_{d-j}\}$, and $d_T(v,w)=2+d_T(v',w')$. 
Moreover,
\begin{align*}
d_T(v_i,v_j) &= d_T(v_{d-i}, v_{d-j})=j-i\,,\\
d_T(v_i, v_{d-j})&= d_T(v_{d-i}, v_j) = d-i-j \geq j-i
\end{align*}
with equality only if $j=d/2$. It follows that $d_T(v,w) \geq 2+j-i$ with equality in the 
case $j\neq d/2$ only if $v \in C_i$ and $w\in C_j$, or $v\in C_{d-i}$ and $w\in C_{d-j}$. Summing $d_T(v,w)$ over all $v \in D_i$ and $w\in D_j$, and using the 
fact that $g$ is nondecreasing, we obtain
\begin{align}\label{BoundDiDj}
W_T(D_i, D_j) \geq  g((2+j-i)) |D_i| \cdot |D_j|\,,
\end{align}
with equality in the case $j\neq d/2$ if $C_i=C_j=\emptyset,~~\textrm{or}~~C_{d-i} = C_{d-j} = \emptyset$. Hence
\begin{equation} \label{eq:Bound-sum-DiDj}
 \sum_{1 \leq i < j \leq \lceil (d-1)/2 \rceil} W_T(D_i, D_j) 
    \geq   \sum_{1 \leq i < j \leq \lceil (d-1)/2 \rceil} g((2+j-i)) |D_i| \cdot |D_j|. 
\end{equation}
Equality holds in~\eqref{eq:Bound-sum-DiDj} if $C_i=C_j = \emptyset$ or 
$C_{d-i} = C_{d-j}=\emptyset$ for all $1 \leq i < j \leq \lceil (d-1)/2 \rceil$; 
so equality in~\eqref{eq:Bound-sum-DiDj} holds if 
$T=\mathbf{T}(r; m_2, m_3,\ldots,m_l)$.
On the other hand, if $g$ is strictly increasing, then equality holds in~\eqref{BoundDiDj} only if either $C_1=C_2=\cdots = C_{\lfloor (d-1)/2 \rfloor } = \emptyset$, or 
$C_{\lceil (d-1)/2 \rceil +1} =C_{\lceil (d-1)/2 \rceil +2} =\cdots = C_{d-1} = \emptyset$,  
or $d=4$. 

From~\eqref{GeneralWiener} and the fact that $\mathbf{T}(r; m_2, m_3,\ldots,m_l)$ minimises each of the four terms on the right hand side of~\eqref{GeneralWiener}, we obtain 
\[ W(T;g) \geq W(\mathbf{T}(r; m_2, m_3,\ldots,m_l); g) \]
as desired. 

In order to prove the second part of (a) assume that $g$ is strictly increasing
and that $W(T;g) \geq W(\mathbf{T}(r; m_2, m_3,\ldots,m_l); g)$. It follows from the above development that either $C_1=C_2=\cdots = C_{\lfloor (d-1)/2 \rfloor } = \emptyset$, or 
$C_{\lceil (d-1)/2 \rceil +1} =C_{\lceil (d-1)/2 \rceil +2} =\cdots = C_{d-1} = \emptyset$. 
In both cases $T$ is isomorphic to $\mathbf{T}(r; m_2,\ldots,m_l)$. 
\end{proof}

The proof of Theorem~\ref{Theo:MinGenr} now follows from Lemma~\ref{Lem:GeneralisedW} and Lemma~\ref{la:WienerTypehasCater}.

\medskip
Suitable choices of the function $g$ now yield the following corollaries:

\begin{coro}
Let $S=(r;m_2,m_3,\ldots,m_l)$ be a tree eccentric sequence. Then the 
tree $\mathbf{T}(r; m_2,\ldots,m_l)$ uniquely minimises 
the Wiener index $W(T)$,
the hyper Wiener index $WW(T)$,
the generalised Wiener index $W^{\lambda}(T)$ for $\lambda > 0$, 
and the reciprocal complementary Wiener index $RCW(T)$ 
among all trees $T$ whose eccentric sequence is $S$.  
\end{coro}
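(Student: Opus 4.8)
The plan is to derive the Corollary as a direct application of Theorem~\ref{Theo:MinGenr}(a). For each named index, I will exhibit a function $g:\mathbb{N}\to\mathbb{R}$ that is nonnegative and \emph{strictly} increasing on the positive integers, and such that the index equals $W(T;g)$. Once such a $g$ is produced, part~(a) of Theorem~\ref{Theo:MinGenr} immediately gives that $\mathbf{T}(r;m_2,\ldots,m_l)$ is the unique minimiser among all trees with eccentric sequence $S$.

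Concretely, I would check the four cases in turn. For the ordinary Wiener index, take $g(x)=x$; this is clearly nonnegative and strictly increasing on $\mathbb{N}$, and $W(T;g)=W(T)$ by definition. For the hyper-Wiener index, take $g(x)=\binom{1+x}{2}=\tfrac{x(x+1)}{2}$; on positive integers this is nonnegative and strictly increasing (the discrete difference $g(x+1)-g(x)=x+1>0$), and $W(T;g)=WW(T)$. For the generalised Wiener index with $\lambda>0$, take $g(x)=x^{\lambda}$; this is nonnegative and strictly increasing on $(0,\infty)$, hence on $\mathbb{N}$, and $W(T;g)=W^{\lambda}(T)$. The only case requiring a moment's thought is the reciprocal complementary Wiener index $RCW(T)=\sum_{\{u,v\}}\frac{1}{d+1-d(u,v)}$, where $d$ is the diameter of $T$. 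Here the summand depends on $d$ as well as on $d(u,v)$, so $g$ is not a priori a fixed function of $d(u,v)$ alone; but all trees in $\mathcal{T}_S$ share the same diameter $d=d_S=r+l-1$, so on the class $\mathcal{T}_S$ we may treat $d$ as a constant and set $g(x)=\frac{1}{d_S+1-x}$ for $x\in\{1,2,\ldots,d_S\}$ (and, say, $g(x)=g(d_S)$ for $x>d_S$, which is irrelevant since no distance in a tree of diameter $d_S$ exceeds $d_S$). On $\{1,\ldots,d_S\}$ the denominator $d_S+1-x$ is a positive integer decreasing in $x$, so $g$ is nonnegative and strictly increasing, and $W(T;g)=RCW(T)$ for every $T\in\mathcal{T}_S$. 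Theorem~\ref{Theo:MinGenr}(a) then applies verbatim.

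The main (and only mild) obstacle is the $RCW$ case just discussed: one must observe that the eccentric sequence fixes the diameter, so that the nominally $d$-dependent weight becomes a genuine strictly increasing function of the distance when the comparison is restricted to $\mathcal{T}_S$ — which is exactly the setting of the theorem. Everything else is a routine verification of monotonicity and nonnegativity of elementary functions. Having checked all four cases, I would conclude that in each case equality $W(T;g)=W(\mathbf{T}(r;m_2,\ldots,m_l);g)$ forces $T=\mathbf{T}(r;m_2,\ldots,m_l)$, which establishes the claimed uniqueness and completes the proof.
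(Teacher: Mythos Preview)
Your proposal is correct and follows exactly the approach the paper intends: the corollary is stated immediately after Theorem~\ref{Theo:MinGenr} with the remark that ``suitable choices of the function $g$ now yield the following corollaries,'' and your case-by-case identification of the appropriate strictly increasing $g$ is precisely what is meant. Your observation that the eccentric sequence fixes the diameter $d_S$, so that the $RCW$ weight becomes a bona fide strictly increasing function of distance on $\mathcal{T}_S$, is the only point requiring comment, and you handle it correctly.
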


\begin{coro}
Let $S=(r;m_2,m_3,\ldots,m_l)$ be a tree eccentric sequence. 
Then the tree $\mathbf{T}(r; m_2,\ldots,m_l)$ uniquely maximises 
the generalised Wiener index $W^{\lambda}(T)$ for $\lambda <0$, 
and specifically the Harary index $H(T)$,  
among all trees $T$ whose eccentric sequence is $S$.  
\end{coro}

\section{$k$-Steiner Wiener index}\label{Sec:KSteinerW}

This section is concerned with another generalisation of the ordinary Wiener index, which is 
based on a generalisation of the usual distance between two vertices.
Let $G$ be a connected graph and $A$ a nonempty subset of $V(G)$. The \emph{Steiner  
distance} 
of $A$ is defined as the minimum number of edges in a subtree of $G$ whose 
vertex set contains $A$. A tree of minimum size containing $A$ is  
referred  to as a \emph{Steiner tree} for $A$. 
If $G$ is a tree, then for every set  
$A\subseteq V(G)$, there is only one Steiner tree. The notion of Steiner distance in
graphs was introduced in 1989 by Chartrand et al.~\cite{Chartrand1989} as a natural 
generalisation of the ordinary distance between two vertices in a graph. 
Let $k>0$ be an integer. 
The \emph{$k$-Steiner Wiener} index, $SW_k(G)$, of $G$ is defined as
\begin{align*}
SW_k(G)=\sum_{\substack{A\subseteq V(G) \\ |A|=k}} d(A)\,.
\end{align*}
This index was explicitly introduced by Li, Mao and Gutman~\cite{LikSteiner2016}, although it had previously been considered by Dankelmann, Oellermann and Swart~\cite{DankelmannAvSteiner1996,DankelmannAvSteiner1997} in a different but 
equivalent way. 

Many results on $W(G)$ are known to hold also for $SW_k(G)$; see for 
instance~\cite{DankelmannAvSteiner1996,DankelmannAvSteiner1997,LikSteiner2016,LUAll2018}. 
There is also an application of $SW_k(G)$ in chemistry: it was shown 
in~\cite{GutmanFutula2015} that for trees $T$, there exists an optimal value $\beta$ 
such that $W(T)+\beta SW_k(T)$ yields a better approximation of the boiling points 
of alkanes than $W(T)$.

The main result of~\cite{AudacePeter} states that among all trees $T$ with a given eccentric sequence $S=(r;m_2,m_3,\ldots,m_l)$, the tree $\mathbf{T}(r; m_2,\ldots,m_l)$ 
is the unique tree that minimises $W(T)$. In this section we generalise this
result by showing that the same tree minimises also the $k$-Steiner Wiener index,
and that the minimising tree is unique if $k$ is not too large (see Theorem~\ref{MainTheokSW}). 

We begin by considering some small and large values of $k$. 
For a non-trivial $n$-vertex tree $T$ and $e\in E(T)$, we denote by $n_1(e)$ and $n_2(e)$ 
the orders of the two components of $T-e$. 
As noted in~\cite{LikSteiner2016}, the $k$-Steiner Wiener index of $T$ can then be expressed in terms of the values of $n_1(e)$ and $n_2(e)$ by counting how many times $e$ appears as an edge of a Steiner tree as follows: 
\begin{eqnarray}
SW_k(T)
 &=& \sum_{e \in E(T)} \sum_{j=1}^{k-1} \binom{n_1(e)}{j} \binom{n_2(e)}{k-j} 
      \nonumber \\
& = & \sum_{e \in E(T)} \Big( {n\choose k} - {n_1(e) \choose k} 
   - {n_2(e) \choose k} \Big).  \label{FormulakSteiner}
\end{eqnarray}
First consider $SW_3(T)$. Using~\eqref{FormulakSteiner}, it is easy 
to establish that $SW_3(T)=\frac{n-2}{2}{W(T)}$; see~\cite{LikSteiner2016}.
It follows that $\mathbf{T}(r; m_2,\ldots,m_l)$ also uniquely minimises
$SW_3(T)$ among all trees with this eccentric sequence. 
On the other hand, for $k=n$, $W_n(T)=n-1$ does not depend on the choice of $T$. 
For $k=n-1$, it was shown in \cite{LikInverseSteiner2018} that 
\[ W_{n-2}(T) = n(n-1) -p\,, \] where $p$ is the number of leaves of $T$. Since among all trees in $\mathcal{T}_S$, the caterpillars are exactly the trees
maximising the  number of leaves (see \cite{AudacePeter2}), it follows that 
a tree $T\in \mathcal{T}_S$ minimises $W_{n-1}(T)$ if and only if it is a caterpillar, so in this case the extremal tree is not unique in general. 

\medskip
We present the proof of our main result in two steps, with the first step proving 
that a tree with minimum $k$-Steiner Wiener index among all trees with the same eccentric sequence is necessarily a caterpillar.

\begin{la}\label{Lem:TcaterSwk}
Fix a tree eccentric sequence $S$ and an integer $k\in \{2,3,\ldots, n-1\}$. Let 
$T \in \mathcal{T}_S$ such that $SW_k(T)\geq SW_k(T')$ for all 
$T' \in \mathcal{T}_S$. Then $T$ is a caterpillar.
\end{la}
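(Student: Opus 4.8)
The plan is to argue by contradiction with the mate operation of Definition~\ref{StandDecomp}, mirroring the proof of Lemma~\ref{Lem:GeneralisedW}, but now tracking the change of $SW_k$ through the edge-counting identity~\eqref{FormulakSteiner}. Suppose an extremal $T\in\mathcal T_S$ is not a caterpillar. Fix a longest path $P=v_0,\ldots,v_d$, a vertex $v_j$ carrying a non-leaf neighbour $u\notin V(P)$ with $j\ge d/2$, and let $T'$ be the mate of $T$, with $U,L,R$ as in Definition~\ref{StandDecomp}. By Lemma~\ref{TTprimSameS}, $T'\in\mathcal T_S$. Writing $\mu=|U|$, $\lambda=|L|$, $\rho=|R|$ (so $u\in L$ and $n=\lambda+\mu+\rho$), the goal is to compute $SW_k(T')-SW_k(T)$ and show it is nonzero.

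First I would pin down which edges change their component split $(n_1(e),n_2(e))$ under the mate. A direct check shows that every edge lying inside $U$, inside $L\setminus\{u\}$, or inside $R$ keeps its split, and that each deleted edge $uz$ (with $z\in U\cap N(u)$) has in $T$ exactly the split that its replacement $v_{j+1}z$ has in $T'$, since the subtree hanging at $z$ is untouched. Hence only the two edges $uv_j$ and $v_jv_{j+1}$ are affected: the split of $uv_j$ passes from $(1+\mu,\,n-1-\mu)$ to $(1,\,n-1)$, and that of $v_jv_{j+1}$ from $(\lambda+\mu,\,\rho)$ to $(\lambda,\,\rho+\mu)$. Substituting into~\eqref{FormulakSteiner}, writing $f(x)=\binom{x}{k}$ and $h(x)=f(x+\mu)-f(x)$, and using $n-1-\mu=\lambda+\rho-1$ together with $\binom{1}{k}=0$, all contributions cancel except
\[ SW_k(T')-SW_k(T)=h(1)+h(\lambda)-h(\lambda+\rho-1)-h(\rho). \]

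The heart of the argument is the sign of this quantity. Since $h(x+1)-h(x)=\binom{x+\mu}{k-1}-\binom{x}{k-1}\ge 0$, the map $h$ is nondecreasing; with $1\le\rho$ and $\lambda\le\lambda+\rho-1$ this already gives $SW_k(T')\le SW_k(T)$, so the mate never increases $SW_k$. I expect the strictness to be the main obstacle, and the key observation is that because $P$ is a \emph{longest} path, $v_j$ can be adjacent neither to $v_1$ nor to $v_{d-1}$ (either would yield a path through $U$ longer than $P$); hence $2\le j\le d-2$ and so $\rho\ge d-j\ge 2$. With $\rho\ge 2$ the telescoping of $h(\lambda+\rho-1)-h(\lambda)$ contains its top term $\binom{n-2}{k-1}-\binom{n-2-\mu}{k-1}$, which is strictly positive for every $2\le k\le n-1$, since $\mu\ge 1$ and $k-1\le n-2$. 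Therefore $SW_k(T')<SW_k(T)$ strictly.

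This strict decrease is exactly what excludes a non-caterpillar extremal tree: a tree that minimises $SW_k$ over $\mathcal T_S$ must be a caterpillar, for otherwise its mate would lie in $\mathcal T_S$ with strictly smaller $SW_k$, a contradiction. Because the mate strictly \emph{decreases} $SW_k$, the argument characterises the tree of \emph{minimum} $SW_k$; the inequality in the statement is therefore naturally $SW_k(T)\le SW_k(T')$, in agreement with the prose preceding the lemma and with the main theorem (Theorem~\ref{MainTheokSW}). The only routine points left are the edge-split bookkeeping of the second paragraph and the elementary monotonicity of $x\mapsto\binom{x}{k-1}$.
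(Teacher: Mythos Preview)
Your argument is correct and takes a genuinely different route from the paper. The paper tracks $d_{T'}(A)-d_T(A)$ for each $k$-subset $A$, tabulates the possible values by which of $L\setminus\{u\},R,U,\{u\}$ the set $A$ meets, and then builds explicit injections $M_2\to M_{-2}$ and $M_1\to M_{-1}$ (via $A\mapsto A\cup\{r\}\setminus\{u\}$ for a fixed $r\in R$) to show $SW_k(T')-SW_k(T)<0$, invoking $|R|>1$ for the strictness. You instead work entirely through the edge identity~\eqref{FormulakSteiner}: after observing that the mate changes the split of only the two edges $uv_j$ and $v_jv_{j+1}$, the whole difference collapses to $h(1)+h(\lambda)-h(\rho)-h(\lambda+\rho-1)$ with $h(x)=\binom{x+\mu}{k}-\binom{x}{k}$, and convexity of $x\mapsto\binom{x}{k}$ (equivalently, monotonicity of $h$) does the rest. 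Your approach is shorter and more computational; the paper's is more combinatorial and makes the role of individual $k$-sets explicit, which could be useful if one wanted finer control over the equality cases.

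Two minor remarks. First, your sentence ``$v_j$ can be adjacent neither to $v_1$ nor to $v_{d-1}$'' should read ``$j$ can equal neither $1$ nor $d-1$''; the argument you give (extending $P$ through $u$ into $U$) is exactly right and yields $j\le d-2$, hence $\rho\ge 2$. The paper uses the same fact ($|R|>1$) but leaves its justification implicit. Second, you are right that the inequality in the lemma statement is reversed: the proof (both yours and the paper's) shows that a non-caterpillar admits a mate in $\mathcal{T}_S$ with strictly smaller $SW_k$, so the hypothesis should read $SW_k(T)\le SW_k(T')$, i.e., $T$ minimises $SW_k$ over $\mathcal{T}_S$; this is consistent with Theorem~\ref{MainTheokSW} and the paper's own proof.
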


\begin{proof}
Suppose to the contrary that $T$ is not a caterpillar. Let $P_0$ be a longest path in $T$,  
$v_j$ a vertex on $P_0$ that has a neighbour $u$ which is not a leaf. 
Let $L$, $R$ and $U$ be as in Definition \ref{StandDecomp}, and let
$T'$ be the mate of $T$. Then $T'\in \mathcal{T}_S$ by Lemma~\ref{TTprimSameS}.  

Let $A \subset V(T)$ be a $k$-set. 
It is easy to see that the value $d_{T'}(A)-d_T(A)$ depends only on 
which of the sets 
$A \cap R$, $A \cap L-\{u\}$, $A \cap U$ and $A \cap \{u\}$ are non-empty.  
Clearly, if $A \subseteq L \cup R$ or $A \subseteq U$, then 
$d_{T'}(A)-d_T(A)=0$. Considering all other possibilities, we find the values of $d_{T'}(A) -d_T(A)$ as summarised in Table~\ref{Table:diff} below. 

\begin{table}[htbp]\centering
\caption{Cases where $A \cap (L \cup R)\neq \emptyset$ and $A \cap U \neq \emptyset$. }\label{Table:diff} \vspace*{-0.7cm}
\[ \begin{array}{|c|c|c||c|}
\hline
A \cap (L-\{u\}) & A \cap R  & A \cap \{u\} &  d_{T'}(A) - d_T(A)  \\ \hline
 = \emptyset  & = \emptyset  & \neq \emptyset  & 2 \\
 = \emptyset  & \neq \emptyset  & = \emptyset  & -2 \\
 = \emptyset  & \neq \emptyset  & \neq \emptyset & 0 \\
 \neq \emptyset  & = \emptyset  & = \emptyset  & 0 \\
 \neq \emptyset  & = \emptyset  & \neq \emptyset  & 1 \\
  \neq \emptyset  & \neq \emptyset  & = \emptyset  & -1 \\    
 \neq \emptyset  & \neq \emptyset  & \neq \emptyset  & 0    \\ \hline
\end{array}    \]
\end{table}

For $i\in \{-2, -1, 0, 1,2\}$ we define $M_i$ to be the set of all $k$-subsets 
$A$ of $V(T)$ for which $d_{T'}(A) - d_T(A) = i$. Clearly,
\begin{equation} SW_k(T') - SW_k(T) = \sum_{i=-2}^2 i |M_i| 
     = 2\big(|M_{2}|-|M_{-2}|\big) + \big(|M_1|-|M_{-1}|\big).   \label{eq:Steiner-Wiener-1}
\end{equation}      
From Table~\ref{Table:diff} we see that 
\[ M_{-2} = \{ A \subseteq R \cup U \ | \ A \cap R \neq \emptyset \ \textrm{and} \ 
A \cap U \neq \emptyset \}, \] 
\[ M_{2} = \{ A \subseteq  U \cup \{u\} \ | \ A \cap U \neq \emptyset \ \textrm{and} \ 
u \in A \}, \] 
\[ M_{-1} = \{ A \subseteq (L - \{u\}) \cup R \cup U \ | \ A \cap (L-\{u\}) \neq \emptyset 
\ \textrm{and} \ A \cap R \neq \emptyset \ \textrm{and} \ 
A \cap U \neq \emptyset \}, \] 
\[ M_{1} = \{ A \subseteq L \cup U \ | \ u \in A \ \textrm{and} \ A \cap (L-\{u\}) \neq \emptyset \ \textrm{and} \ 
A \cap U \neq \emptyset \}. \] 

Now fix a vertex $r \in R$.  We define a mapping $f_2: M_{2} \rightarrow M_{-2}$ by setting
$f_2(A) = A \cup \{r\} - \{u\}$ for all $A \in M_2$. Clearly, $f_2$ is an injection, 
so $|M_{-2}| \geq |M_2|$. Similarly, we define a mapping $f_1: M_{1} \rightarrow M_{-1}$ by setting
$f_1(A) = A \cup \{r\} - \{u\}$ for all $A \in M_1$. Clearly, $f_1$ is an injection, but 
since $|R|>1$, $f_1$ is  not a surjection. Therefore, $|M_{-1}|>|M_1|$. Hence,
by~\eqref{eq:Steiner-Wiener-1} we conclude that 
\[   SW_k(T') - SW_k(T) < 0. \]
This contradiction to $SW_k(T)$ being minimum proves the lemma. 
\end{proof}

The following lemma, which we give without proof, is needed for the proof of the 
main theorem of this section.

\begin{la} \label{la:maximising-binomial}
Given $t, z, k \in \mathbb{N}$ with $2t \leq z$. 
Then the function $f(x,y) = {x \choose k} + {y \choose k}$ is maximised, subject to $x+y =z$, $x,y \in \mathbb{N}$ and $x,y \geq t$, if $x=t$ and $y=z-t$. \\
If $k \leq z-t$, then $(x,y)=(t, z-t)$ and $(x,y)=(z-t,t)$ are the only
choices for $x$ and $y$ maximising $f$. 
\end{la}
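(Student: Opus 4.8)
The plan is to eliminate one variable by writing $y=z-x$ and to study the single–variable integer function
\[ g(x):=\binom{x}{k}+\binom{z-x}{k},\qquad x\in\{t,t+1,\ldots,z-t\}. \]
Note $g(x)=g(z-x)$, so it is enough to understand the monotonicity of $g$ and to conclude that its maximum over the feasible range sits at an endpoint. The key computation is the forward difference. By Pascal's identity $\binom{x+1}{k}-\binom{x}{k}=\binom{x}{k-1}$ and $\binom{z-x}{k}-\binom{z-x-1}{k}=\binom{z-x-1}{k-1}$, so
\[ g(x+1)-g(x)=\binom{x}{k-1}-\binom{z-x-1}{k-1}. \]

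Since $a\mapsto\binom{a}{k-1}$ is nondecreasing on $\mathbb{Z}_{\ge 0}$, the displayed difference is $\le 0$ whenever $x\le z-x-1$. Hence $g$ is nonincreasing on $\{t,\ldots,\lceil z/2\rceil\}$ and, by the symmetry $g(x)=g(z-x)$, nondecreasing on $\{\lfloor z/2\rfloor,\ldots,z-t\}$. Because $2t\le z$, these two intervals together cover $\{t,\ldots,z-t\}$, so the maximum of $g$ over the whole range is attained at $x=t$ or $x=z-t$, with common value $\binom{t}{k}+\binom{z-t}{k}$. This gives the first assertion.

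For the uniqueness claim under the hypothesis $k\le z-t$ (and $k\ge 2$, the only case of interest, since for $k=1$ the function $g$ is constant and uniqueness genuinely fails — harmlessly, as the lemma is applied with $k\ge 2$), I would show that the first step of the descent is strict: if $t+1\le z-t$ then $g(t)>g(t+1)$, i.e.\ $\binom{t}{k-1}<\binom{z-t-1}{k-1}$. One has $t\le z-t-1$; if $t<k-1$ the left side is $0$ while $z-t\ge k$ forces $\binom{z-t-1}{k-1}\ge 1$, and if $t\ge k-1$ then $\binom{\cdot}{k-1}$ is strictly increasing from $k-1$ on, giving a strict inequality unless $t=z-t-1$. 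The two exceptional configurations are handled directly: when $2t=z$ the feasible range is the single point $x=t=z-t$, and when $2t=z-1$ it is exactly $\{t,z-t\}=\{t,t+1\}$, so in both cases the claimed pairs are trivially the only maximisers. Combining the strict first step with the weak monotonicity from the previous paragraph then yields $g(t)=g(z-t)>g(x)$ for all $t<x<z-t$.

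The difference computation and the monotonicity of $\binom{a}{k-1}$ are routine; the only mildly delicate point, and the one I expect to require care in the write-up, is the uniqueness analysis, where one must split according to the parity of $z$ (to treat the boundary cases $2t=z$ and $2t=z-1$) and according to whether $t<k-1$ or $t\ge k-1$.
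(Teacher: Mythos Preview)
Your argument is correct. The paper actually states this lemma \emph{without proof}, so there is no approach to compare against; your forward-difference computation via Pascal's identity, together with the symmetry $g(x)=g(z-x)$, is the natural way to establish it, and your handling of the endpoint cases $2t=z$ and $2t=z-1$ for uniqueness is clean. Your observation that the uniqueness clause fails for $k=1$ is also accurate: the lemma as stated in the paper is slightly imprecise at $k=1$, but as you note, it is only invoked with $k\ge 2$, so this has no effect on the paper's results.
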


We can now state and prove the main theorem of this section.

\begin{theo}\label{MainTheokSW}
Let $S=(r;m_2,m_3,\ldots,m_l)$ be a tree eccentric sequence, $T \in \mathcal{T}_S$ 
and $k\in \{2,3,\ldots,n-1\}$. Then 
\[ SW_k(T) \geq SW_k(\mathbf{T}(r; m_2,\ldots,m_l)). \]
If $k \leq n - \lceil \frac{d}{2} \rceil$, then equality implies that
$T=\mathbf{T}(r; m_2,\ldots,m_l)$. 
\end{theo}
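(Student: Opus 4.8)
\emph{Overall scheme.} I would follow the two--step scheme of Theorem~\ref{Theo:MinGenr}. First, by Lemma~\ref{Lem:TcaterSwk}, and more precisely by the fact established in its proof that passing from a non--caterpillar to its mate \emph{strictly} decreases $SW_k$, every $T\in\mathcal{T}_S$ that is not a caterpillar can be turned, after finitely many mate steps, into some $C\in\mathcal{C}_S$ with $SW_k(C)<SW_k(T)$. Thus it is enough to prove that $\mathbf{T}:=\mathbf{T}(r;m_2,\ldots,m_l)$ minimises $SW_k$ among all caterpillars in $\mathcal{C}_S$, and is the unique minimiser there when $k\le n-\lceil d/2\rceil$; non--caterpillars are then excluded from equality for every admissible $k$.

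\emph{Reduction to a sum over backbone edges.} For $T\in\mathcal{C}_S$ I would use~\eqref{FormulakSteiner} in the form $SW_k(T)=(n-1)\binom nk-\sum_{e\in E(T)}\bigl(\binom{n_1(e)}k+\binom{n_2(e)}k\bigr)$. Using the structure of caterpillars in $\mathcal{C}_S$ from the proof of Lemma~\ref{la:WienerTypehasCater}, every such caterpillar has the same number $n-d+1$ of leaves, and each pendant edge contributes $\binom1k+\binom{n-1}k=\binom{n-1}k$; hence minimising $SW_k$ over $\mathcal{C}_S$ amounts to maximising $\Phi(T):=\sum_{j=1}^{d-2}\bigl(\binom{N_j}k+\binom{n-N_j}k\bigr)$, where, with a longest path $P_0=v_0,\ldots,v_d$ and backbone $v_1,\ldots,v_{d-1}$, $N_j$ denotes the order of the component of $T-v_jv_{j+1}$ containing $v_0$. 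Writing $C_i$ for the set of leaves at $v_i$ not on $P_0$, one has $N_j=(j+1)+\sum_{i=1}^j|C_i|$, and the eccentric sequence forces the pairwise constraints $|C_i|+|C_{d-i}|=m_{l+1-i}-2$ (with $|C_{d/2}|$ fixed when $d$ is even). Note that Lemma~\ref{la:maximising-binomial} cannot be applied cut by cut, since the $N_j$ are nested partial sums and $\psi(x):=\binom xk+\binom{n-x}k$ is not monotone.

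\emph{The key step.} I would establish: (i) $N_j\le N_j^{\mathbf{T}}$ for every $j$, with $\mathbf{T}$ attaining equality for all $j$ simultaneously --- this follows by grouping the $|C_i|$ into the pairs $\{i,d-i\}$: a pair both of whose members lie in $\{1,\ldots,j\}$ contributes $|C_i|+|C_{d-i}|=m_{l+1-i}-2$, a straddling pair contributes only $|C_i|\le m_{l+1-i}-2$, and $\mathbf{T}$ is exactly the caterpillar loading all of $m_{l+1-i}-2$ onto the lower--indexed member of each pair; and (ii) $N_{d-1-j}-N_j$ equals a constant $\Delta_j$ depending only on $S$ and $j$, since the index block $[\,j+1,\,d-1-j\,]$ is closed under $i\mapsto d-i$. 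Now pair the cuts via $j\leftrightarrow d-1-j$: for $j<d-1-j$ the combined contribution of these two cuts to $\Phi$ equals $g_j(N_j)$, where $g_j(x):=\psi(x)+\psi(x+\Delta_j)$ is convex (a sum of convex functions) and symmetric about $x_0^{(j)}:=(n-\Delta_j)/2$. By (i), (ii) and the mirror symmetry of $\mathcal{C}_S$, the attainable values of $N_j$ all lie in the interval with endpoints $N_j^{\mathbf{T}}$ and $n-N_{d-1-j}^{\mathbf{T}}$, which sum to $n-\Delta_j=2x_0^{(j)}$; since a convex function symmetric about the centre of an interval is maximised over it at the endpoints, $g_j(N_j)\le g_j(N_j^{\mathbf{T}})$. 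When $d$ is odd the self--paired cut $j=(d-1)/2$ contributes $\psi(N_j)$ over an interval symmetric about $n/2$ and is handled identically. Summing over pairs gives $\Phi(T)\le\Phi(\mathbf{T})$, i.e.\ $SW_k(T)\ge SW_k(\mathbf{T})$.

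\emph{Uniqueness and main obstacle.} If $k\le n-\lceil d/2\rceil$ and $SW_k(T)=SW_k(\mathbf{T})$ with $T\in\mathcal{C}_S$, then $g_j(N_j)=g_j(N_j^{\mathbf{T}})$ for every pair; under this hypothesis the ``boundary'' binomial coefficients $\binom{n-i-1}{k-1}$ with $1\le i\le\lceil d/2\rceil-1$ are all nonzero, which forces each $g_j$ (and $\psi$ for the self--paired cut) to be strictly increasing at its upper endpoint, so that its maximum over the interval is attained only at the two endpoints; hence each $N_j$ equals $N_j^{\mathbf{T}}$ or $n-N_{d-1-j}^{\mathbf{T}}$. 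As the $N_j$ are nested partial sums, these choices are not all compatible: if $p$ is the first pair at which $T$ and $\mathbf{T}$ disagree, one checks that $N_p$ then lies strictly between the two endpoints of its interval --- a contradiction --- so $T$ is $\mathbf{T}$ or the mirror image of $\mathbf{T}$, hence $T=\mathbf{T}$. I expect the main work to lie precisely in making (i) and (ii) rigorous (handling the even--$d$ middle block and the odd--$d$ self--paired cut along the way) and, for the uniqueness clause, in identifying exactly which coefficients $\binom{n-i-1}{k-1}$ must be nonzero --- this is what pins the result to the hypothesis $k\le n-\lceil d/2\rceil$, and is the step I would expect to be most delicate.
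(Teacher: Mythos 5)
Your proposal is correct in substance and follows the same skeleton as the paper's proof: reduction to caterpillars via Lemma~\ref{Lem:TcaterSwk} (whose proof indeed gives a \emph{strict} decrease, excluding non-caterpillars from equality), the edge-cut expansion~\eqref{FormulakSteiner} with constant pendant contributions, and the pairing of backbone cuts $j\leftrightarrow d-1-j$, whose middle-block size $\Delta_j=b_j$ is fixed by $S$. Your convexity-plus-symmetry argument for $g_j(x)=\psi(x)+\psi(x+\Delta_j)$ is exactly the content of Lemma~\ref{la:maximising-binomial}, which the paper applies twice per pair (once to $\binom{a_i}{k}+\binom{c_i}{k}$ with $a_i+c_i$ fixed, once to $\binom{n-a_i}{k}+\binom{n-c_i}{k}$); so the inequality part is the paper's argument in different packaging. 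Where you genuinely diverge is uniqueness: the paper only exploits equality at the cut(s) nearest the centre, where Lemma~\ref{la:maximising-binomial}'s uniqueness clause (this is where $k\le n-\lceil d/2\rceil$ enters) forces one half of $P_0$ to carry no off-path leaves, and the eccentric sequence then pins down the whole tree at once; you instead force endpoint values of $N_j$ at \emph{every} cut and then argue compatibility. That route works, but two details need repair. First, your ``first pair of disagreement'' claim is not right as stated: at the first $p$ with $N_p\neq N_p^{\mathbf T}$, the value $N_p$ can legitimately equal the \emph{lower} endpoint (all earlier leaf-classes being empty), since $T$ may simply be following the mirror of $\mathbf T$; the contradiction must be extracted for a $T$ that is neither $\mathbf T$ nor its mirror (a split class, or two nonempty classes loaded on opposite sides, makes some $N_j$ strictly interior). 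Second, the binding nonzero coefficient is $\binom{n-\lceil d/2\rceil-1}{k-1}$, coming from the middle cut ($d$ odd) or the pair $j=(d-2)/2$ ($d$ even), i.e.\ one index beyond the range $1\le i\le\lceil d/2\rceil-1$ you list --- this is precisely what ties strictness to the hypothesis $k\le n-\lceil d/2\rceil$, as you anticipated in flagging this as the delicate step. With these two points fixed, your proof goes through; the paper's middle-cut shortcut simply avoids the global compatibility analysis.
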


\begin{proof}
By Lemma~\ref{Lem:TcaterSwk}, it suffices to prove the theorem for caterpillars. Let 
$T\in \mathcal{C}_S$ and let $P: v_1, v_2,\ldots, v_{d-1}$ be the backbone, 
and let $P_0: v_0, v_1,\ldots, v_{d}$ be a longest path in $T$. 
We define a weight function $w$ on the set of edges of $T$ by 
\[ w(e) = {n \choose k} - {n_1(e) \choose k} - {n_2(e) \choose k}, \]
where $n_1(e)$ and $n_2(e)$ are the orders of the two components of $T-e$. By \eqref{FormulakSteiner}, 
\[ SW_k(T) = \sum_{e \in E(T)} w(e). \]
If $e$ is a pendent edge, then the two components of $T-e$ have $1$ and $n-1$ vertices,
respectively. Hence we have
\begin{equation}   \label{eq:weight-of-pendant-edges} 
w(e) = {n \choose k} - {n-1 \choose k} \quad \textrm {for all $e \in E(T)-E(P)$}. 
\end{equation} 

In order to bound $\sum_{e \in E(P)} w(e)$ from below,  
we partition the set $E(P)$ into sets $E_1, E_2,\ldots, E_{\lfloor (d-1)/2 \rfloor}$, where
$E_i = \{v_iv_{i+1}, v_{d-1-i}v_{d-i}\}$ for $i=1,2,\ldots, \lfloor \frac{d-1}{2} \rfloor$.
Note that $E_i$ contains two edges, unless $d$ is odd and $i=\frac{d-1}{2}$, in which
case $E_{(d-1)/2}= \{v_{(d-1)/2} v_{(d+1)/2}\}$.  
First consider $T-E_i$ for $i<\frac{d-1}{2}$. Then $T-E_i$ consists of three 
components $A_i$, $B_i$ and $C_i$, where 
$A_i$ contains the vertices of the $(v_1, v_i)$-segment of $P$ and its neighbours, 
$B_i$ contains the vertices of the $(v_{i+1}, v_{d-1-i})$-segment of $P$ and its neighbours, and
$C_i$ contains the vertices of the $(v_{d-i}, v_{d-1})$-segment of $P$ and its neighbours. 
Denote the cardinalities of $A_i$, $B_i$ and $C_i$ by $a_i$, $b_i$ and $c_i$, respectively. 
The set $B_i$ contains all vertices whose eccentricity is not more than $d-i$, except 
$v_i$ and $v_{d-i}$. Hence, we have 
\[ b_i = -2 + \sum_{j=1}^{l-i} m_j, \]
\begin{equation} \label{eq:value--ai+ci} 
a_i+c_i = n+ 2 - \sum_{j=1}^{l-i} m_j. 
\end{equation}
Since $\{v_0, v_1,\ldots,v_i\} \subseteq A_i$ and 
$\{v_{d-i}, v_{d-i+1},\ldots, v_{d} \} \subseteq C_i$, 
we also have 
\begin{equation} \label{eq:bound-on-ai-ci}
a_i \geq i+1 \quad \textrm{and} \quad c_i \geq i+1.   
\end{equation}
We now bound the total weight of the edges in $E_i$. 
\begin{eqnarray}
\sum_{e \in E_i} w(e) & = & w(v_iv_{i+1}) + w(v_{d-1-i}v_{d-i}) \nonumber \\
   & = & {n \choose k} - {a_i \choose k} - {n-a_i \choose k} 
        + {n \choose k} - {c_i \choose k} - {n-c_i \choose k} \nonumber 
\end{eqnarray}
It follows from Lemma~\ref{la:maximising-binomial} that 
the term ${a_i \choose k} + {c_i \choose k}$ is maximised, subject to~\eqref{eq:value--ai+ci} and~\eqref{eq:bound-on-ai-ci}, if 
$a_i =  n+ 1 - i - \sum_{j=1}^{l-i} m_j$ and $c_i=i+1$. 
Since by~\eqref{eq:value--ai+ci} we have 
$(n-a_i)+(n-c_i)= n- 2 + \sum_{j=1}^{l-i} m_j$, and by \eqref{eq:bound-on-ai-ci}
we have $n-a_i \leq n-i-1$ and $n-c_i \leq n-i-1$, it follows by 
Lemma~\ref{la:maximising-binomial} that
the term ${n-a_i \choose k} + {n-c_i \choose k}$ is maximised if 
$n-a_i =  i-1 + \sum_{j=1}^{l-i} m_j$ and $n-c_i=n-i-1$. 
Hence 
\begin{eqnarray}  
\sum_{e \in E_i} w(e) & \geq & 2 {n \choose k} 
    - { n+ 1 - i - \sum_{j=1}^{l-i} m_j \choose k}    - {i+1 \choose k} \nonumber \\
 & &   - {i-1 + \sum_{j=1}^{l-i} m_j \choose k}    - {n-i-1 \choose k}.    
    \label{eq:bound-on-total-weight-in-Ei} 
\end{eqnarray}
We note that equality holds in \eqref{eq:bound-on-total-weight-in-Ei} if 
$a_i=i+1$ or $c_i=i+1$, that is, if either none of the vertices $v_0,v_1,\ldots,v_i$
has a neighbour not on $P$, or none of the vertices $v_{d-i}, v_{d-i+1},\ldots,v_d$ has a neighbour not on $P$. That means, in particular, that for the tree 
$\mathbf{T}(r; m_2,\ldots,m_l)$ equality holds for all $i$ with 
$1 \leq i < \frac{d-1}{2}$.  \\[1mm]
{\sc Case 1:} $d$ is odd. \\
Then the set $E_{(d-1)/2}$ consists of only one edge, viz 
$v_{(d-1)/2}v_{(d+1)/2}$. Since removing $v_{(d-1)/2}v_{(d+1)/2}$ splits the path 
$P_0$ into two parts with $(d+1)/2$ vertices each,
we have $n_j(v_{(d-1)/2}v_{(d+1)/2}) \geq  \frac{d+1}{2}$ for $j=1,2$, and so,
by Lemma \ref{la:maximising-binomial}, 
\begin{eqnarray} 
w(v_{(d-1)/2}v_{(d+1)/2}) &=& {n \choose k} - {n_1(v_{(d-1)/2}v_{(d+1)/2})  \choose k} 
                    - {n_2 (v_{(d-1)/2}v_{(d+1)/2}) \choose k} \nonumber \\
  & \geq & {n \choose k} - {(d+1)/2 \choose k} 
                    - {n- (d+1)/2 \choose k}, \label{eq:weight-of-middle-edge-of-P}
\end{eqnarray}
with equality if $n_j(v_{(d-1)/2}v_{(d+1)/2}) = \frac{d+1}{2}$ for some
$j \in \{1,2\}$, so equality holds in particular for $\mathbf{T}(r; m_2,\ldots,m_l)$.  \\
Adding \eqref{eq:weight-of-pendant-edges}, \eqref{eq:bound-on-total-weight-in-Ei}
for $i=1,2,\ldots,\frac{d-3}{2}$, and \eqref{eq:weight-of-middle-edge-of-P} 
we obtain
\begin{eqnarray*}
SW_k(T) & = & \sum_{e \in E(T)-E(P)} w(e) 
       + \Big( \sum_{i=1}^{(d-3)/2} \sum_{e \in E_i} w(e) \Big) 
                              + w(v_{(d-1)/2}v_{(d+1)/2}) \\
     & \geq & (n-d+1) \Big[ {n \choose k} - {n-1 \choose k} \Big]
      +  \Big[ \sum_{i=1}^{(d-3)/2} 2 {n \choose k} 
    - { n+ 1 - i - \sum_{j=1}^{l-i} m_j \choose k}   \\
  & &  - {i+1 \choose k} 
    - {i-1 + \sum_{j=1}^{l-i} m_j \choose k}    - {n-i-1 \choose k} \Big] \\
  & &    + \Big[ {n \choose k} - {(d+1)/2  \choose k} 
                    - {n- (d+1)/2 \choose k} \Big] \\
                       & = & SW_k(\mathbf{T}(r; m_2,\ldots,m_l))\,, 
\end{eqnarray*}
with the last equality holding since for $T=\mathbf{T}(r; m_2,\ldots,m_l)$,  
we have equality in \eqref{eq:weight-of-pendant-edges}, 
\eqref{eq:bound-on-total-weight-in-Ei} and \eqref{eq:weight-of-middle-edge-of-P}. 
This proves the first part of the theorem for the case where $d=r+l-1$ is odd.  

For the proof of the second part of the theorem, assume that 
$$SW_k(T) = SW_k(\mathbf{T}(r; m_2,\ldots,m_l))~~ \text{and that} ~~ k \leq n - \frac{d+1}{2}\,.$$ 
Then we have equality in~\eqref{eq:weight-of-middle-edge-of-P}. 
By Lemma \ref{la:maximising-binomial} this implies that 
$n_j(v_{(d-1)/2}v_{(d+1)/2}) = \frac{d+1}{2}$ for some $j \in \{1,2\}$, 
so either the vertices $v_0, v_1,\ldots, v_{(d-1)/2}$ have no neighbour
outside $P_0$ in $T$, or the vertices $v_{(d+1)/2}, v_{(d+3)/2},\ldots, v_{d}$ 
have no neighbour outside $P_0$ in $T$. It is easy to see that this 
proves that $T$ is isomorphic to $\mathbf{T}(r; m_2,\ldots,m_l)$.  \\[1mm]
{\sc Case 2:} $d$ is even. \\
The proof for the case $d$ even is very similar to the case $d$ odd. Adding~\eqref{eq:weight-of-pendant-edges} and~\eqref{eq:bound-on-total-weight-in-Ei}
for $i=1,2,\ldots,\frac{d-2}{2}$ yields that 
$SW_k(T) \geq  SW_k(\mathbf{T}(r; m_2,\ldots,m_l))$, so the first part of the theorem
holds. As in Case 1 we conclude that either the vertices
$v_0, v_1,\ldots, v_{(d-2)/2}$ have no neighbour
outside $P_0$ in $T$, or the vertices $v_{(d+2)/2}, v_{(d+4)/2}, \\ \ldots, v_d$ 
have no neighbour outside $P_0$ in $T$, and so 
$T$ is isomorphic to $\mathbf{T}(r; m_2,\ldots,m_l)$.
\end{proof}

\medskip
We now show that the condition $k \leq n - \lceil \frac{d}{2} \rceil$ for uniqueness
of the extremal tree in Theorem \ref{MainTheokSW} is best possible.
Let $n,d, k \in \mathbb{N}$ with $n \geq d+3$ and 
$n+1 - \lceil \frac{d}{2} \rceil \leq k \leq n-1$
be given. As before, let $P_0$ be the path $v_0, v_1,\ldots,v_d$.\\
First assume that $d$ is odd. 
let $T_1$ be obtained from $P_0$ by adding $n-d-1$ new vertices and joining 
them to $v_{(d-1)/2}$, and 
let $T_2$ be obtained from $P_0$ by adding $n-d-1$ new vertices and joining 
one of these to $v_{(d+1)/2}$ and the remaining $n-d-2$ vertices to $v_{(d-1)/2}$. 
Then both, $T_1$ and $T_2$ have $n-d+1$ vertices of eccentricity $\frac{d+3}{2}$, 
and two vertices of eccentricity $i$ for all 
$i\in \mathbb{N}$ with $\frac{d+1}{2} \leq i \leq d$ and $i \neq \frac{d+3}{2}$.
Clearly, $T_1 = {\bf T}(\frac{d+1}{2}; n-d+1, 2, 2, \ldots,2)$. The trees
$T_1$ and $T_2$ for $n=11$ and $d=7$ are shown in Figure \ref{fig:exmple-for-d-odd},  
where $v_{(d-1)/2}$ and $v_{(d+1)/2}$ are solid grey. 

  \begin{figure}[h]
  \begin{center}
\begin{tikzpicture}
  [scale=0.4,inner sep=1mm, 
   vertex/.style={circle,thick,draw}, 
   thickedge/.style={line width=2pt}] 
    \node[vertex] (a1) at (0,0) [fill=white] {};
    \node[vertex] (a2) at (2,0) [fill=white] {};
    \node[vertex] (a3) at (4,0) [fill=white] {};
    \node[vertex] (a4) at (6,0) [fill=gray] {};
    \node[vertex] (a5) at (8,0) [fill=gray] {};
    \node[vertex] (a6) at (10,0) [fill=white] {};
    \node[vertex] (a7) at (12,0) [fill=white] {};
    \node[vertex] (a8) at (14,0) [fill=white] {};    
    \node[vertex] (b1) at (5,2) [fill=white] {};
    \node[vertex] (b2) at (6,2) [fill=white] {};
    \node[vertex] (b3) at (7,2) [fill=white] {};

   \draw[thick,black] (a1)--(a2)--(a3)--(a4)--(a5)--(a6)--(a7)--(a8);
    \draw[thick, black] (b1)--(a4)  (b2)--(a4)  
                      (b3)--(a4);            
\end{tikzpicture}
\hspace*{2em}
\begin{tikzpicture}
  [scale=0.4,inner sep=1mm, 
   vertex/.style={circle,thick,draw}, 
   thickedge/.style={line width=2pt}] 
    \node[vertex] (a1) at (0,0) [fill=white] {};
    \node[vertex] (a2) at (2,0) [fill=white] {};
    \node[vertex] (a3) at (4,0) [fill=white] {};
    \node[vertex] (a4) at (6,0) [fill=gray] {};
    \node[vertex] (a5) at (8,0) [fill=gray] {};
    \node[vertex] (a6) at (10,0) [fill=white] {};
    \node[vertex] (a7) at (12,0) [fill=white] {};
    \node[vertex] (a8) at (14,0) [fill=white] {};    
    \node[vertex] (b1) at (5,2) [fill=white] {};
    \node[vertex] (b2) at (6,2) [fill=white] {};
    \node[vertex] (b3) at (8,2) [fill=white] {};

   \draw[thick,black] (a1)--(a2)--(a3)--(a4)--(a5)--(a6)--(a7)--(a8);
    \draw[thick, black] (b1)--(a4)  (b2)--(a4)  
                      (b3)--(a5);            
\end{tikzpicture}
\end{center}
\caption{The trees $T_1$ and $T_2$ for $n=11$ and $d=7$.}
\label{fig:exmple-for-d-odd}
\end{figure}
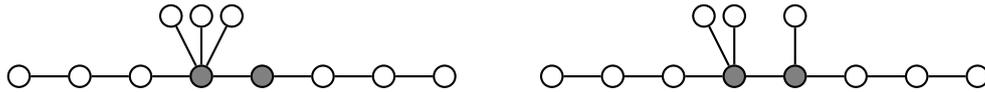
 
To see that $SW_k(T_1)=SW_k(T_2)$, notice that if an edge of $T_1$ splits 
$T_1$ into two components with $a$ and $b$ vertices, then so does the
corresponding edge of $T_2$, unless $v_{(d-1)/2}v_{(d+1)/2}$. 
Therefore, the weight of each edge of $T_1$ except $v_{(d-1)/2}v_{(d+1)/2}$. 
equals the weight of its corresponding 
edge in $T_2$. The edge $v_{(d-1)/2}v_{(d+1)/2}$ also has the same weight 
${n \choose k}$, in $T_1$ and in $T_2$ since $k>n-\frac{d+1}{2}$. By~\eqref{FormulakSteiner} 
we conclude that $SW_k(T_1)=SW_k(T_2)$. Hence the extremal 
tree is not unique for $n+1 - \lceil \frac{d}{2} \rceil \leq k \leq n-1$.

If $d$ is even, then a similar construction demonstrates that the extremal
tree is not unique. 
Let $T_1$ be obtained from $P_0$ by adding $n-d-1$ new vertices and joining 
them to $v_{(d-2)/2}$, and 
let $T_2$ be obtained from $P_0$ by adding $n-d-1$ new vertices and joining 
one of these to $v_{(d+2)/2}$ and the remaining $n-d-2$ vertices to $v_{(d-2)/2}$. 
The same reasoning as above shows that $SW_k(T_1)=SW_k(T_2)$ for 
$n+1 - \lceil \frac{d}{2} \rceil \leq k \leq n-1$, and so the extremal tree
is not unique in this case.

\section{Trees with a given diameter}\label{Sec:Diamet}

In this section we apply our results from the two preceding sections to 
derive sharp lower or upper bounds (depending on whether $g$ is nondecreasing or 
nonincreasing) on the Wiener-type index $W(T;g)$ for trees $T$ with given order and diameter. 
Given $n,d \in \mathbb{N}$ with $2 \leq d \leq n-1$, let $\mathbf{T}_{d,n}$ be 
the tree obtained by attaching $n-d-1$ pendent edges to a centre vertex of 
the path of order $d+1$; so
$\mathbf{T}_{d,n} = \mathbf{T}( \lceil d/2 \rceil; n-d+1,2,2,\ldots,2)$.  
The tree $\mathbf{T}_{d,n}$ has been shown to minimise
or maximise several distance-based topological indices among all trees with
order $n$ and diameter $d$. In this section we show that these results are
consequences of our results from the previous sections, thereby correcting an error in the literature.

\begin{la} \label{la:diameter}
Let $S=(r; m_2,\ldots,m_l)$ be a tree eccentric sequence 
with $d=r+l-1 \geq 3$ and $\max\{m_3, m_4,\ldots,m_l\} >2$. Let 
$i\in \{3,4,\ldots,l\}$ be the largest value such that $m_i>2$. 
Let $S'$ be the sequence 
$S' =(r, m_2', m_3', \ldots , m_l')$ with $m_{i-1}' =m_{i-1}+m_i-2$,  $m_i'=2$
and $m_j'=m_j$ for all $j\notin \{i-1, i\}$. \\[1cm]
(a) Then the sequence $S'$  is tree eccentric. \\
(b) If $g$ is nonnegative and strictly increasing, then  
\[ W( \mathbf{T}( r; m_2', m_3',\ldots,m_l');g) 
              <  W( \mathbf{T}( r; m_1, m_2,\ldots,m_l);g). \]
(c) If $g$ is nonnegative and strictly decreasing, then 
\[ W( \mathbf{T}( r; m_2', m_3',\ldots,m_l');g) 
              >  W( \mathbf{T}( r; m_2, m_3,\ldots,m_l);g). \]
(d) If $2 \leq k \leq n-1$, then 
\[ SW_k(\mathbf{T}( r; m_2', m_3',\ldots,m_l')) 
              \leq  SW_k(\mathbf{T}( r; m_1, m_2,\ldots,m_l)), \]
and if $k\leq n-\lceil d/2 \rceil$ and $i=3$, then the inequality is strict.              
\end{la}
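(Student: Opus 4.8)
The plan is to prove the four parts essentially independently, the real content lying in (b)/(c) and in (d). Write $\mathbf{T}=\mathbf{T}(r;m_2,\ldots,m_l)$ and $\mathbf{T}'=\mathbf{T}(r;m_2',\ldots,m_l')$, put $p:=l+1-i$, and note that both trees are built on the same longest path $P_0:v_0,v_1,\ldots,v_d$ and that $\mathbf{T}'$ arises from $\mathbf{T}$ by detaching the $m_i-2$ pendent vertices at $v_p$ and re‑attaching them to the neighbour $v_{p+1}$. The one structural observation that makes everything work is that, since $i$ is the \emph{largest} index with $m_i>2$, in $\mathbf{T}$ no vertex $v_q$ with $q<p$ carries a pendent vertex off $P_0$; hence, apart from the moved vertices, every pendent vertex of $\mathbf{T}$ hangs from some $v_q$ with $q\ge p+1$. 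For (a) I would just check Theorem~\ref{LesniakTheo}: $S'$ has the same order, radius $r$, diameter $d$ and number $m_1$ of smallest entries as $S$, and $m_j'\ge2$ for all $j\in\{2,\ldots,l\}$; condition (i) depends only on the two smallest and the largest entries, hence is inherited from $S$, while (ii) holds because every value in $\{r+1,\ldots,d\}$ still occurs at least twice and, as $n>2$, not only at the ends. (Equivalently, $\mathbf{T}'$ is itself a tree realising $S'$.)

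For (b) and (c) I would write $W(\,\cdot\,;g)$ of such a caterpillar as $W(P_0;g)$ plus the pendent‑to‑path sum $\sum_x\sum_{w\in V(P_0)}g(d(x,w))$ plus the pendent‑to‑pendent sum $\sum_{\{x,y\}}g(d(x,y))$. Relocating the set $X$ of $m_i-2$ pendent vertices from $v_p$ to $v_{p+1}$ fixes $W(P_0;g)$ and changes only terms involving a vertex of $X$; since $d(v_p,v_{p+1})=1$ and no pendent vertex of $\mathbf{T}$ lies below $v_p$, each $x\in X$ gets one step closer to every vertex of $P_0$ on the $v_{p+1}$-side and to every other pendent vertex, and one step farther from the vertices of $P_0$ on the $v_0$-side. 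Setting $h(k):=g(k+1)-g(k)$ and telescoping the path contribution, I expect the identity
\[
W(\mathbf{T}';g)-W(\mathbf{T};g)=-(m_i-2)\Big(\sum_{s=p+2}^{d-p}h(s)+\sum_{j=2}^{i-1}(m_j-2)\,h(i+1-j)\Big).
\]
The first sum is non‑empty because $d-p=r+i-2\ge l+3-i=p+2$ whenever $i\ge3$ (using $d=r+l-1$ and $2r\ge d$). Thus for $g$ strictly increasing all $h(s)>0$, the bracket is positive, and since $m_i-2\ge1$ the difference is negative, which is (b); for $g$ strictly decreasing all $h(s)<0$, the bracket is negative, and the difference is positive, which is (c).

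For (d) I would use the edge‑weight form of~\eqref{FormulakSteiner}, $SW_k(T)=\sum_{e\in E(T)}\big(\binom nk-\binom{n_1(e)}k-\binom{n_2(e)}k\big)$. Every pendent edge has weight $\binom nk-\binom{n-1}k$ in both trees (here $k\ge2$), and every path edge $v_qv_{q+1}$ with $q\ne p$ splits $\mathbf{T}$ and $\mathbf{T}'$ into components of the same orders; hence $SW_k(\mathbf{T}')-SW_k(\mathbf{T})$ equals the change of weight of the single edge $e^*:=v_pv_{p+1}$. The $v_0$-side of $\mathbf{T}-e^*$ is $\{v_0,\ldots,v_p\}\cup X$, of order $p+m_i-1$, whereas in $\mathbf{T}'-e^*$ it has order exactly $p+1$; in either tree both sides of $e^*$ have order at least $p+1$ (the far side contains $v_{p+1},\ldots,v_d$ and $d-p\ge p+1$), and $2(p+1)\le d\le n$. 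By Lemma~\ref{la:maximising-binomial}, $\binom xk+\binom{n-x}k$ with $x,n-x\ge p+1$ is maximised when one part equals $p+1$, so $w_{\mathbf{T}'}(e^*)\le w_{\mathbf{T}}(e^*)$ and $SW_k(\mathbf{T}')\le SW_k(\mathbf{T})$. If in addition $i=3$ and $k\le n-\lceil d/2\rceil$, then $p+1=l-1=d-r$, so $k\le n-\lceil d/2\rceil\le n-(d-r)=n-(p+1)$ and the uniqueness clause of Lemma~\ref{la:maximising-binomial} applies; since $m_3>2$ gives $p+m_3-1>p+1$ and the constraint $m_3\le n-2l+3$ gives $p+m_3-1<n-p-1$, the split of $\mathbf{T}-e^*$ is neither maximiser, so $w_{\mathbf{T}'}(e^*)<w_{\mathbf{T}}(e^*)$ and the inequality is strict.

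I expect the main obstacle to be the bookkeeping behind the displayed identity in step (b)/(c): one has to partition $V(\mathbf{T})$ by the edge $v_pv_{p+1}$, record exactly which distances go up and which go down by one when $X$ is moved, and verify that the $v_0$-side contributes nothing extra — this is precisely where the maximality of $i$ enters — before the telescoping collapses the whole difference into a single‑signed multiple of $m_i-2$. After that, parts (b)–(d) follow by inspection, modulo the elementary inequalities $p\le\tfrac{d-1}{2}$ and $d-p\ge p+2$ (for $i\ge3$) together with a direct appeal to Lemma~\ref{la:maximising-binomial}.
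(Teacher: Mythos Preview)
Your proposal is correct and follows essentially the same route as the paper: the same relabelling $p=l+1-i$, the same observation that $\mathbf{T}'$ arises from $\mathbf{T}$ by sliding the $m_i-2$ pendents from $v_p$ to $v_{p+1}$, the same splitting into path and pendent contributions for (b)/(c), and the same single-edge comparison via the edge-weight formula and Lemma~\ref{la:maximising-binomial} for (d). Your displayed identity for $W(\mathbf{T}';g)-W(\mathbf{T};g)$ is a slightly sharper version of what the paper does (the paper drops the nonnegative $B$-sum and keeps only the telescoped path term $|A|\big(g(d-p+1)-g(p+2)\big)$), and your strictness argument in (d) spells out more carefully why the $\mathbf{T}$-split is not an extremal pair, but these are refinements of presentation rather than a different method.
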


\begin{proof}
Denote the tree  $\mathbf{T}( r; m_2, m_3,\ldots,m_l)$ by $T$ and 
$\mathbf{T}( r; m_2', m_3',\ldots,m_l')$ by $T'$. Since $S'$ is the 
eccentric sequence of $T'$, the sequence $S'$ is tree eccentric. 
This proves part (a). Note that $n_S=n_{S'}=n$ and that $d_S=d_{S'}=d$.

To prove part (b) assume that $g$ is strictly increasing. 
Let $P_0:v_0, v_1,\ldots,v_d$ be a longest path in $T$ as well as $T'$.
Let $p= l+1-i$. So the eccentricity of $v_p$ is $r+i-2$ in $T$, and $v_p$ is therefore the first (from $v_0$) of the vertices of $P_0$ that in $T$ 
has neighbours outside $P_0$. Denote the set $N(v_p)-V(P_0)$ by $A$. 
Then $T'$ can be obtained from $T$ by replacing the edge
$xv_p$ with $xv_{p+1}$ for all $x\in A$.  Denote the set $V(T)-(V(P_0) \cup A)$ by $B$. 

Comparing $W(T;g)$ and $W(T';g)$, we note that the distance between two 
vertices in $T$ differs from the distance between the same vertices in $T'$ if
and only if exactly one of them belongs to $A$. Hence
\begin{eqnarray}
W(T;g) - W(T';g) & = &
    \sum_{x\in A, y \in B} [g(d_{T}(x,y)) - g(d_{T'}(x,y)) ] \nonumber \\
  & &   + \sum_{x\in A} \sum_{j=0}^d [g(d_{T}(x,v_j)) - g(d_{T}(x,v_j)) ]
                    \nonumber \\
  & = &     \sum_{x\in A, y \in B} [g(d_{T}(x,y)) - g(d_{T}(x,y)-1) ] \nonumber \\
  & &   + |A| \sum_{j=0}^d [g(|j-p|+1) - g(|j-(p+1)|+1) ]. \nonumber
\end{eqnarray}
Now
$\sum_{x\in A, y \in B} [g(d_{T}(x,y)) - g(d_{T}(x,y)-1)] > 0$
since $g$ is strictly increasing. In the second sum we add the terms 
$g(p+1), g(p), \ldots,g(1)$ and $g(2), g(3), \ldots, g(d-p+1)$,
and we subtract 
$g(p+2), g(p+1), \ldots,g(1)$ and $g(2), g(3), \ldots, g(d-p)$.
Cancelling equal terms, we obtain 
\[ W(T;g) - W(T';g) \geq |A| \big(g(d-p+1)-g(p+2) \big)  
                > 0 \]
since $d-p+1 \geq p+2$ and $g$ is increasing.  This completes the 
proof of (b). 
We omit the proof of (c) since it is almost identical to the proof of (b).                 

\medskip
To prove part (d) we make use of the fact that 
$SW_k(T) = \sum_{e \in E(T)} w(e)$, where the weight of an edge $e$ is
defined by 
$w(e) = {n \choose k} - {n_1(e) \choose k} - {n_2(e) \choose k}$, and  
$n_1(e)$ and $n_2(e)$ are the orders of the two components of $T-e$. 

It is easy to verify that for every edge $e$ of $T$, the corresponding edge
of $T'$ has the same weight, unless $e=v_{p}v_{p+1}$, where $p$ is as
defined in the proof of (b). Denoting the weight of $e$ in $T$ and $T'$ 
by $w(e)$ and $w'(e)$, respectively, we have 
$w(v_{p}v_{p+1})= {n \choose k} - {p+m_i-1 \choose k} - {n-p-m_i+1 \choose k}$ and 
$w'(v_{p}v_{p+1})= {n \choose k} - {p+1 \choose k} - {n-p-1 \choose k}$. Hence
\begin{eqnarray*} 
SW_k(T) - SW_k(T') & = & w(v_{p}v_{p+1})-w'(v_{p}v_{p+1})  \\
  & \hspace*{-9em}   = & \hspace*{-5em} - {p+m_i -1 \choose k} - {n-p-m_i +1 \choose k} 
              + {p+1 \choose k} + {n-p-1 \choose k}   \\
  & \hspace*{-9em}   \geq  & \hspace*{-5em} 0,                 
\end{eqnarray*}  
with the last inequality holding by Lemma \ref{la:maximising-binomial}
since $(p+1)+(n-p-1)=n=(p+m_i-1)+(n-p-m_i+1)$.
If $k \leq n - \lceil \frac{d}{2} \rceil$ and $i=3$, then $p=l-2$ 
and thus $n-p-1=n+1-l= n - \lceil \frac{d}{2} \rceil \geq k$, and 
by Lemma~\ref{la:maximising-binomial} the inequality is strict, i.e.
$SW_k(T) - SW_k(T')>0$, as desired. 

\end{proof}

\begin{theo}\label{theo:MinGenrDiam}
Let $g(x)$ be a function on $\mathbb{N}$ that is nonnegative.  
Let $T$ be a tree with order $n$ and diameter $d$. \\
(a) If $g$ is strictly increasing, then 
\[ W(T;g)\geq W(\mathbf{T}_{d,n}; g). \]
Equality holds if and only if $T=\mathbf{T}_{d,n}$. \\
(b) If $g$ is strictly decreasing, then 
\[ W(T;g)\leq W(\mathbf{T}_{d,n}; g). \]
Equality holds if and only if $T=\mathbf{T}_{d,n}$.
\end{theo}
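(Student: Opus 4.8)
The plan is to deduce Theorem~\ref{theo:MinGenrDiam} by combining the two main results that precede it: Theorem~\ref{Theo:MinGenr} reduces an arbitrary tree of given order and diameter to the ``standard'' tree with the same eccentric sequence, and Lemma~\ref{la:diameter} lets us optimise over all admissible eccentric sequences.

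First I would set up the reduction. Let $T$ be a tree of order $n$ and diameter $d$, and let $S=(r;m_2,\ldots,m_l)$ be its eccentric sequence; then $r=\lceil d/2\rceil$, the length $l$ is determined by $d$, one has $m_1+m_2+\cdots+m_l=n$, and $m_j\geq 2$ for all $j\geq 2$. By Theorem~\ref{Theo:MinGenr}(a), for $g$ strictly increasing we have $W(T;g)\geq W(\mathbf{T}(r;m_2,\ldots,m_l);g)$, with equality if and only if $T=\mathbf{T}(r;m_2,\ldots,m_l)$. It therefore suffices to show that, among all tree eccentric sequences $S$ with $n_S=n$ and $d_S=d$, the quantity $W(\mathbf{T}(r;m_2,\ldots,m_l);g)$ is minimised, and uniquely so, when $S$ is the eccentric sequence of $\mathbf{T}_{d,n}$, namely $S=(\lceil d/2\rceil;\, n-d+1,2,2,\ldots,2)$.

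For this second step I would iterate Lemma~\ref{la:diameter}. If $d\leq 3$ then $l\leq 2$, so the eccentric sequence of a tree of order $n$ and diameter $d$ is uniquely $(\lceil d/2\rceil;\, n-d+1)$, which is precisely the eccentric sequence of $\mathbf{T}_{d,n}$, and there is nothing further to prove. If $d\geq 4$ and $\max\{m_3,m_4,\ldots,m_l\}>2$, apply Lemma~\ref{la:diameter}(a),(b): the modified sequence $S'$ is again tree eccentric with the same $n$ and $d$, and $W(\mathbf{T}(r;m_2',\ldots,m_l');g)<W(\mathbf{T}(r;m_2,\ldots,m_l);g)$. Each such step takes the largest index $i$ with $m_i>2$ and replaces $(m_{i-1},m_i)$ by $(m_{i-1}+m_i-2,\,2)$, so the nonnegative integer $\sum_{j=3}^{l} j\,(m_j-2)$ strictly decreases; hence after finitely many steps we reach a sequence with $m_3=m_4=\cdots=m_l=2$, and then $m_1+\cdots+m_l=n$ forces $m_2=n-d+1$, i.e. we have reached the eccentric sequence of $\mathbf{T}_{d,n}$.

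Chaining the two steps gives $W(T;g)\geq W(\mathbf{T}(r;m_2,\ldots,m_l);g)\geq W(\mathbf{T}_{d,n};g)$, the first inequality being strict unless $T=\mathbf{T}(r;m_2,\ldots,m_l)$ and the second strict unless already $m_3=\cdots=m_l=2$ (equivalently $\mathbf{T}(r;m_2,\ldots,m_l)=\mathbf{T}_{d,n}$); so equality holds throughout if and only if $T=\mathbf{T}_{d,n}$, proving part~(a). Part~(b) is entirely analogous: use Theorem~\ref{Theo:MinGenr}(b) and Lemma~\ref{la:diameter}(c), with all inequalities reversed. The only point requiring care is the bookkeeping in combining the strict and non-strict inequalities to pin down the equality case; the substantive work has already been carried out in Theorem~\ref{Theo:MinGenr} and Lemma~\ref{la:diameter}.
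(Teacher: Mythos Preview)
Your proposal is correct and follows essentially the same approach as the paper: reduce to the standard tree $\mathbf{T}(r;m_2,\ldots,m_l)$ via Theorem~\ref{Theo:MinGenr}, then iterate Lemma~\ref{la:diameter} to push the eccentric sequence down to that of $\mathbf{T}_{d,n}$. Your explicit treatment of the case $d\leq 3$ and the potential function $\sum_{j=3}^{l} j(m_j-2)$ for termination are helpful additions that the paper leaves implicit.
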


\begin{proof}
We only prove part (a) since the proof of (b) is almost identical.  \\
Let $T$ be a tree with order $n$ and diameter $d$, and let 
$S=(r, m_2,\ldots,m_l)$ be its eccentric sequence. 
By Theorem \ref{Theo:MinGenr}, 
\begin{equation} \label{eq:coro-diameter-1}
W(T;g) \geq W(\mathbf{T}(r;m_2,m_3,\ldots,m_l)), 
\end{equation}
with equality if and only if $T=\mathbf{T}(r;m_2,m_3,\ldots,m_l)$. \\
We claim that
\begin{equation} \label{eq:coro-diameter-2} 
W(\mathbf{T}(r;m_2,m_3,\ldots,m_l)) \geq W(\mathbf{T}_{d,n};g). 
\end{equation} 
Indeed, if $\mathbf{T}(r;m_2,m_3,\ldots,m_l) = \mathbf{T}_{d,n}$,
then there is nothing to prove, and if 
$\mathbf{T}(r;m_2,m_3,\ldots,m_l)) \neq \mathbf{T}_{d,n}$
then it is easy to see that $\max\{m_3, m_4,\ldots,m_l\} >2$, and so
repeated application of Lemma~\ref{la:diameter} yields~\eqref{eq:coro-diameter-2}.
Moreover, equality in~\eqref{eq:coro-diameter-2} holds by Lemma~\ref{la:diameter}
only if $\mathbf{T}(r;m_2,m_3,\ldots,m_l) = \mathbf{T}_{d,n}$. \\
Now~\eqref{eq:coro-diameter-1} and~\eqref{eq:coro-diameter-2} yield the 
inequality in part (a) of the theorem. \\
If we have equality, i.e., if $W(T;g) = W(\mathbf{T}_{d,n}; g)$, then
we have equality in \eqref{eq:coro-diameter-1} and \eqref{eq:coro-diameter-2}, 
and thus $T=\mathbf{T}(r;m_2,m_3,\ldots,m_l)=  \mathbf{T}_{d,n}$. 
\end{proof}

We note that the inequality in part (a) of Theorem~\ref{theo:MinGenrDiam}
holds even if $g$ is not strictly increasing but only nondecreasing. However,
in this case equality may hold for trees other than  $\mathbf{T}_{d,n}$. 
The same holds true for part (b) of Theorem \ref{theo:MinGenrDiam}. \\

\medskip
The following corollaries are immediate consequences of Theorem~\ref{theo:MinGenrDiam}. 
Let $d,n$ be fixed integers such that $1 <d \leq n-1$.

\begin{coro}[\cite{xu2014survey}]
Among all trees with order $n$ and diameter $d$, the tree $\mathbf{T}_{d,n}$ is the
unique tree that minimises the hyper-Wiener index.
\end{coro}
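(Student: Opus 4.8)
The plan is to exhibit the hyper-Wiener index as a Wiener-type index $W(T;g)$ for a suitable strictly increasing, nonnegative function $g$, and then invoke Theorem~\ref{theo:MinGenrDiam}(a) directly. Concretely, I would set $g(x) = \binom{1+x}{2} = \frac{x(x+1)}{2}$ for $x \in \mathbb{N}$, so that by definition
\[
WW(T) = \sum_{\{u,v\}\subset V(T)} \binom{1+d(u,v)}{2} = \sum_{\{u,v\}\subset V(T)} g(d(u,v)) = W(T;g).
\]

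Next I would verify the two hypotheses needed for Theorem~\ref{theo:MinGenrDiam}(a). First, $g$ is nonnegative on $\mathbb{N}$ since $g(x) = x(x+1)/2 \geq 1 > 0$ for all $x \geq 1$. Second, $g$ is strictly increasing on $\mathbb{N}$: for $x \in \mathbb{N}$ we have $g(x+1) - g(x) = \frac{(x+1)(x+2)}{2} - \frac{x(x+1)}{2} = x+1 > 0$. (Equivalently, $g(x) = x(x+1)/2$ is the restriction to the positive integers of a strictly increasing function on $[0,\infty)$.)

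With these two facts in hand, Theorem~\ref{theo:MinGenrDiam}(a) applies verbatim to $g$: for every tree $T$ of order $n$ and diameter $d$ we get $W(T;g) \geq W(\mathbf{T}_{d,n};g)$, with equality if and only if $T = \mathbf{T}_{d,n}$. Translating back through $WW(\cdot) = W(\cdot\,;g)$ gives $WW(T) \geq WW(\mathbf{T}_{d,n})$ with equality if and only if $T = \mathbf{T}_{d,n}$, which is exactly the claimed statement. There is essentially no obstacle here: the entire content of the corollary has been packaged into Theorem~\ref{theo:MinGenrDiam}, and the only thing to check — that the particular $g$ coming from the hyper-Wiener index is nonnegative and strictly increasing — is the one-line computation above. (I would remark, if desired, that the analogous strategy recovers the corresponding results for the Harary index, the generalised Wiener index $W^\lambda$ with $\lambda \neq 0$, and the reciprocal complementary Wiener index, simply by choosing $g(x) = 1/x$, $g(x) = x^\lambda$, or $g(x) = 1/(d+1-x)$ and noting the appropriate monotonicity.)
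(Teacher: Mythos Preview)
Your proof is correct and is exactly the approach the paper intends: the corollary is stated as an immediate consequence of Theorem~\ref{theo:MinGenrDiam}, and your verification that $g(x)=\binom{1+x}{2}$ is nonnegative and strictly increasing is the only thing required.
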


\begin{coro}
(a) Let $\lambda \in \mathbb{R}$ with $\lambda > 0$, Among all trees with order $n$
and diameter $d$, the tree  $\mathbf{T}_{d,n}$ is the unique tree that minimises 
the generalised Wiener index $W^{\lambda}(T)$. \\
(b) Let $\lambda \in \mathbb{R}$ with $\lambda<0$. Among all trees with order $n$
and diameter $d$, the tree  $\mathbf{T}_{d,n}$ is the unique tree that maximises 
the generalised Wiener index $W^{\lambda}(T)$. 
\end{coro}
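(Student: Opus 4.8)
The plan is to deduce both parts directly from Theorem~\ref{theo:MinGenrDiam} by an appropriate choice of the function $g$. For part (a), I would take $g(x) = x^{\lambda}$. Since every argument $x$ lies in $\mathbb{N}$, hence $x \geq 1$, and $\lambda > 0$, we have $g(x) = x^{\lambda} \geq 1 > 0$, so $g$ is nonnegative on $\mathbb{N}$; moreover $x \mapsto x^{\lambda}$ is strictly increasing for $\lambda > 0$. Thus $g$ satisfies the hypotheses of Theorem~\ref{theo:MinGenrDiam}(a), and since $W(T;g) = \sum_{\{u,v\}} d(u,v)^{\lambda} = W^{\lambda}(T)$ by definition, the theorem yields $W^{\lambda}(T) \geq W^{\lambda}(\mathbf{T}_{d,n})$ with equality if and only if $T = \mathbf{T}_{d,n}$.

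For part (b), I would again set $g(x) = x^{\lambda}$, now with $\lambda < 0$. For $x \geq 1$ we still have $x^{\lambda} > 0$, so $g$ is nonnegative on $\mathbb{N}$, and $x \mapsto x^{\lambda}$ is strictly decreasing when $\lambda < 0$. Hence Theorem~\ref{theo:MinGenrDiam}(b) applies and gives $W^{\lambda}(T) = W(T;g) \leq W(\mathbf{T}_{d,n};g) = W^{\lambda}(\mathbf{T}_{d,n})$, with equality if and only if $T = \mathbf{T}_{d,n}$.

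There is essentially no obstacle: the only things to check are that $g(x) = x^{\lambda}$ is nonnegative and has the correct strict monotonicity on the positive integers, both immediate from elementary properties of power functions, and that $W(T; x^{\lambda})$ coincides with the generalised Wiener index $W^{\lambda}(T)$, which is just the definition. I would remark in passing that the identical argument, with Theorem~\ref{Theo:MinGenr} in place of Theorem~\ref{theo:MinGenrDiam}, gives the analogous statements for trees with a prescribed eccentric sequence.
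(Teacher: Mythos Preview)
Your proposal is correct and is exactly the approach the paper takes: the corollary is stated there as an immediate consequence of Theorem~\ref{theo:MinGenrDiam}, obtained by setting $g(x)=x^{\lambda}$ with the appropriate sign of $\lambda$. Your verification of nonnegativity and strict monotonicity of $x\mapsto x^{\lambda}$ on $\mathbb{N}$, together with the identification $W(T;x^{\lambda})=W^{\lambda}(T)$, is precisely what is needed.
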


\begin{coro}[\cite{YuFengHarary}]
Among all trees with order $n$ and diameter $d$, the tree $\mathbf{T}_{d,n}$ is the
unique tree that maximises the Harary index.
\end{coro}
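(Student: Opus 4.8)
The plan is to obtain this corollary as an immediate specialisation of Theorem~\ref{theo:MinGenrDiam}(b). Recall that the Harary index can be written as
\[ H(T) = \sum_{\{u,v\}\subseteq V(T)} \frac{1}{d(u,v)} = W(T;g), \]
where $g(x) = 1/x$. Thus the first and only real step is to verify that $g$ satisfies the hypotheses of part (b) of Theorem~\ref{theo:MinGenrDiam}: that $g$ is a nonnegative, strictly decreasing function on $\mathbb{N}$. Both are clear, since $1/x > 0$ for every positive integer $x$, and $1/(x+1) < 1/x$ for all $x \in \mathbb{N}$.

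Having checked this, I would simply invoke Theorem~\ref{theo:MinGenrDiam}(b) with this choice of $g$: for every tree $T$ with order $n$ and diameter $d$ we get
\[ H(T) = W(T;g) \leq W(\mathbf{T}_{d,n};g) = H(\mathbf{T}_{d,n}), \]
and, since $g$ is \emph{strictly} decreasing, the equality clause of Theorem~\ref{theo:MinGenrDiam}(b) gives that equality holds if and only if $T = \mathbf{T}_{d,n}$. This establishes that $\mathbf{T}_{d,n}$ is the unique tree maximising the Harary index among all trees of order $n$ and diameter $d$.

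There is no substantial obstacle here: the entire content has already been done in the proof of Theorem~\ref{theo:MinGenrDiam}, which in turn rests on Theorem~\ref{Theo:MinGenr} and Lemma~\ref{la:diameter}. The only point worth stating explicitly is that the strict monotonicity of $g(x)=1/x$ is what upgrades the inequality to the uniqueness of the extremal tree, and this is exactly why the hypothesis ``$\lambda < 0$'' in the generalised-Wiener corollary includes the Harary index as the special case $\lambda = -1$.
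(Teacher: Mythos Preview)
Your proof is correct and follows exactly the intended route: the paper presents this corollary as an immediate consequence of Theorem~\ref{theo:MinGenrDiam}(b), and you have simply made explicit the choice $g(x)=1/x$ and verified that it is nonnegative and strictly decreasing on $\mathbb{N}$.
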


\begin{coro}[\cite{CaiZhou2009}]
Among all trees with order $n$ and diameter $d$, the tree $\mathbf{T}_{d,n}$ is the
unique tree that minimises the reciprocal complementary Wiener index.
\end{coro}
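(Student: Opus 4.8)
The plan is to derive this statement from Theorem~\ref{theo:MinGenrDiam}(a) by choosing the right function $g$. Recall that the reciprocal complementary Wiener index of a tree $T$ of diameter $d$ is $RCW(T) = \sum_{\{u,v\}\subseteq V(T)} \frac{1}{d+1-d_T(u,v)}$, and note that every tree under consideration has diameter exactly $d$, so all of its pairwise distances lie in $\{1,2,\ldots,d\}$ and each summand is well-defined and positive.

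First I would define $g\colon\mathbb{N}\to\mathbb{R}$ by $g(x)=\frac{1}{d+1-x}$ for $1\le x\le d$, and extend $g$ to integers $x>d$ by any rule keeping it nonnegative and strictly increasing (for instance $g(x)=g(d)+(x-d)$); the values of $g$ beyond $d$ are immaterial since no tree of diameter $d$ has a pair of vertices at distance greater than $d$. Then $g$ is nonnegative and strictly increasing on $\mathbb{N}$: on $\{1,\ldots,d\}$ this is because increasing $x$ decreases $d+1-x$ and hence increases its reciprocal, and the extension is increasing by construction. Consequently $W(T;g)=\sum_{\{u,v\}}g(d_T(u,v))=RCW(T)$ for every tree $T$ of order $n$ and diameter $d$.

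Finally, applying Theorem~\ref{theo:MinGenrDiam}(a) to this $g$ yields $W(T;g)\ge W(\mathbf{T}_{d,n};g)$ for every tree $T$ of order $n$ and diameter $d$, with equality if and only if $T=\mathbf{T}_{d,n}$; translating back, $RCW(T)\ge RCW(\mathbf{T}_{d,n})$ with equality exactly for $T=\mathbf{T}_{d,n}$, which is the claim. There is essentially no obstacle here: the only subtlety is the harmless one of extending $g$, originally natural only on $\{1,\ldots,d\}$, to a globally nonnegative, strictly increasing function on $\mathbb{N}$ so that Theorem~\ref{theo:MinGenrDiam} applies verbatim.
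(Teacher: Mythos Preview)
Your proof is correct and follows exactly the approach the paper intends: the corollary is listed among those that are ``immediate consequences of Theorem~\ref{theo:MinGenrDiam}'', and you have simply made the choice of $g$ and the minor technical point about extending $g$ beyond $\{1,\ldots,d\}$ explicit.
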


\medskip
Theorem~\ref{MainTheokSW} (see Section~\ref{Sec:KSteinerW}) also implies the main result of~\cite{LUAll2018}, a sharp lower bound on  the $k$-Steiner Wiener index of trees with given order and diameter. We note, however, a minor error in \cite{LUAll2018}, where the authors incorrectly claim that 
the extremal tree is unique for all $k \in \{2,3,\ldots,n-2\}$. The trees 
$T_1$ and $T_2$ (Figure~\ref{fig:exmple-for-d-odd}) presented at the end of Section \ref{Sec:KSteinerW} show that
this is not the case. In our corollary below we correct this error.

\begin{coro}[\cite{LUAll2018}] 
Let $T$ be a tree with order $n$ and diameter $d$, and let $k\in \{2,3,\ldots, n-1\}$. 
Then 
\begin{equation} \label{eq:Steiner-Wiener-coro} 
SW_k(T) \geq SW_k(\mathbf{T}_{d,n}). 
\end{equation}
If $k \leq n - \lceil \frac{d}{2} \rceil$, then equality implies that 
$T =\mathbf{T}_{d,n}$. 
\end{coro}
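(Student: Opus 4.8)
The plan is to run the same two‑step argument as in the proof of Theorem~\ref{theo:MinGenrDiam}, but with Theorem~\ref{MainTheokSW} in place of Theorem~\ref{Theo:MinGenr} and part~(d) of Lemma~\ref{la:diameter} in place of parts~(b),(c). If $d=2$ (or $n=d+1$) then $T$ is forced to equal $\mathbf{T}_{d,n}$ and there is nothing to prove, so assume $d\geq 3$ and $n\geq d+2$. Let $S=(r;m_2,\ldots,m_l)$ be the eccentric sequence of $T$, so that $d=r+l-1$. Theorem~\ref{MainTheokSW} gives
\begin{equation}\label{eq:SWcoroA}
SW_k(T)\geq SW_k\big(\mathbf{T}(r;m_2,\ldots,m_l)\big),
\end{equation}
and moreover, when $k\leq n-\lceil d/2\rceil$, equality in \eqref{eq:SWcoroA} implies $T=\mathbf{T}(r;m_2,\ldots,m_l)$.

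Next I would establish
\begin{equation}\label{eq:SWcoroB}
SW_k\big(\mathbf{T}(r;m_2,\ldots,m_l)\big)\geq SW_k(\mathbf{T}_{d,n}).
\end{equation}
If $\mathbf{T}(r;m_2,\ldots,m_l)=\mathbf{T}_{d,n}$ this is immediate; otherwise $\max\{m_3,\ldots,m_l\}>2$ and Lemma~\ref{la:diameter} applies. Iterating the operation of that lemma, each time with $i$ the largest index satisfying $m_i>2$, replaces the current sequence by a tree eccentric sequence of the same order and diameter (part~(a)) with a non‑increasing value of $SW_k$ (part~(d)); since $m_{i-1}+m_i-2\geq 3$, the largest index with multiplicity exceeding $2$ decreases by exactly one at each step, so after finitely many steps the sequence becomes $(r;m_2^{*},2,2,\ldots,2)$. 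Because trees have radius $\lceil d/2\rceil$ and the order is preserved throughout, this terminal sequence must equal the eccentric sequence of $\mathbf{T}_{d,n}$ (in particular $m_2^{*}=n-d+1$), which yields \eqref{eq:SWcoroB}. Combining \eqref{eq:SWcoroA} and \eqref{eq:SWcoroB} proves \eqref{eq:Steiner-Wiener-coro}.

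For the equality case, suppose $k\leq n-\lceil d/2\rceil$ and $SW_k(T)=SW_k(\mathbf{T}_{d,n})$; then equality holds in both \eqref{eq:SWcoroA} and \eqref{eq:SWcoroB}, and \eqref{eq:SWcoroA} gives $T=\mathbf{T}(r;m_2,\ldots,m_l)$. If this tree were not $\mathbf{T}_{d,n}$, then the chain above would have at least one step, and its final step would be the one with $i=3$ (the chain ends at $(r;m_2^{*},2,\ldots,2)$); by the last clause of Lemma~\ref{la:diameter}(d), and since $k\leq n-\lceil d/2\rceil$, that step is strict, so $SW_k(\mathbf{T}(r;m_2,\ldots,m_l))>SW_k(\mathbf{T}_{d,n})$, contradicting equality in \eqref{eq:SWcoroB}. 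Hence $T=\mathbf{T}(r;m_2,\ldots,m_l)=\mathbf{T}_{d,n}$.

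The step I expect to require the most care is the bookkeeping in the second paragraph: verifying that the iteration of Lemma~\ref{la:diameter} really terminates at the eccentric sequence of $\mathbf{T}_{d,n}$ (not merely at some sequence of the shape $(r;m_2^{*},2,\ldots,2)$), and — for uniqueness — pinning down that the single step with $i=3$ is necessarily the \emph{last} one, which is what licenses invoking the strictness clause of Lemma~\ref{la:diameter}(d). Beyond that the proof is a routine transcription of the argument of Theorem~\ref{theo:MinGenrDiam}.
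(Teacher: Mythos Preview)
Your proposal is correct and follows essentially the same approach as the paper: apply Theorem~\ref{MainTheokSW} to reduce to $\mathbf{T}(r;m_2,\ldots,m_l)$, then iterate Lemma~\ref{la:diameter}(d) until the sequence becomes $(r;n-d+1,2,\ldots,2)$, using the strictness clause at the final step (where $i=3$) to secure uniqueness when $k\le n-\lceil d/2\rceil$. The paper organises this into two cases ($\mathbf{T}(r;m_2,\ldots,m_l)=\mathbf{T}_{d,n}$ or not) rather than handling them together, but the argument and the bookkeeping (the index $i$ dropping by exactly one per step, terminating at $i=2$) are the same as yours.
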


\begin{proof}
Let $T$ be a tree with order $n$ and diameter $d$, and let 
$S=(r, m_2,\ldots,m_l)$ be its eccentric sequence. By Theorem~\ref{MainTheokSW}
we have 
$SW_k(T) \geq SW_k(\mathbf{T}(r; m_2,\ldots,m_l))$. \\[1mm]
{\sc Case 1:}  $\mathbf{T}(r; m_2,\ldots,m_l) = \mathbf{T}_{d,n}$. \\ 
Then~\eqref{eq:Steiner-Wiener-coro} holds. If $k\leq n- \lceil d/2 \rceil$, then by Theorem~\ref{MainTheokSW} we have equality in~\eqref{eq:Steiner-Wiener-coro} only if 
$T=\mathbf{T}(r; m_2,\ldots,m_l)$, i.e., 
if $T = \mathbf{T}_{d,n}$. \\[1mm]
{\sc Case 2:}  $\mathbf{T}(r; m_2,\ldots,m_l) \neq \mathbf{T}_{d,n}$. \\ 
Then $l \geq 3$ and there exists an index $i$ with $m_i \geq 3$. Indeed, if
$l=2$, then $T$ has diameter $2$, but the star is the only tree with that
property. Also, if $m_i=2$ for all $i\geq 3$, then it is easy to see that
$T=\mathbf{T}_{d,n}$, a contradiction. 
Define $i(r; m_2,m_3,\ldots,m_l)$ to be the largest $i \in \{2,3,\ldots,l\}$ 
for which $m_i>2$. Let $S'$ be the sequence 
$S' =(r, m_2', m_3', \ldots , m_l')$ with $m_{i-1}' =m_{i-1}+m_i-2$,  $m_i'=2$
and $m_j'=m_j$ for all $j\notin \{i-1, i\}$, as described in 
Lemma \ref{la:diameter}, and let $T':=\mathbf{T}(r; m_2',\ldots,m_l')$. Clearly,
$i(r; m_2',\ldots,m_l') = i(r; m_2,\ldots,m_l) -1$, and the tree  
$T'$ has diameter $d$. By part (d) of Lemma~\ref{la:diameter} we have 
$SW_k(T) \geq SW_k(T')$. Applying this modification to the sequence
$S' =(r, m_2', m_3', \ldots , m_l')$ we obtain a sequence 
$S'' =(r, m_2'', m_3'', \ldots , m_l'')$ with $i(S'')=i(S')-1$. Letting 
$T'':=\mathbf{T}(r; m_2'',\ldots,m_l'')$ we have, as above, $SW_k(T') \geq SW_k(T'')$.\\ 
Repeating this step  $s:=i(r; m_2,m_3,\ldots,m_l)-2$ times we obtain a sequence
of sequences $S, S', S'',\ldots, S^{(s)}$ with 
$(i(S), i(S'), i(S''),\ldots, i(S^{s)}) = (i(S), i(S)-1, i(S)-2,\ldots,2)$, 
as well as corresponding trees 
$\mathbf{T}(r;m_2, m_3,\ldots,m_l), T', T'',\ldots,T^{(s)}$, with 
$SW_k(\mathbf{T}(r;m_2, m_3,\ldots,m_l)) \geq SW_k(T') \geq SW_k(T'') \geq \cdots \geq SW_k(T^{(s)})$. 
Since $i(S^{(s-1)})=3$, we have that the last inequality in this chain
is strict if $k\leq n-\lceil d/2 \rceil$ (see Lemma~\ref{la:diameter}). It follows that 
$SW_k(T) \geq SW_k(\mathbf{T}(r;m_2, m_3,\ldots,m_l)) >  SW_k(T^{(s)})$. 
It is now easy to see that $T^{(s)} = \mathbf{T}_{d}$ since $i(S^{(s)})=2$. 
Hence the corollary follows. 
\end{proof}

\end{document}